\lstdefinelanguage{Julia}{
  keywords={function,return,for,in,if,else,end,using},
  sensitive=true,
  comment=[l]\#,
  morestring=[b]",
}
\lstdefinestyle{julia}{
  language=Julia,
  basicstyle=\ttfamily\small,
  keywordstyle=\mathcal{O}lor{blue},
  commentstyle=\mathcal{O}lor{gray},
  stringstyle=\mathcal{O}lor{red!70!black},
  numbers=left,
  numberstyle=\tiny\mathcal{O}lor{gray},
  stepnumber=1,
  numbersep=8pt,
  frame=none,
  backgroundcolor=\mathcal{O}lor{white},
  showstringspaces=false,
  breaklines=true,
  tabsize=2
}
\numberwithin{equation}{section}
\numberwithin{figure}{section}
\theoremstyle{plain}
\newtheorem{thm}{Theorem}[section]
\newtheorem{lem}[thm]{Lemma}
\newtheorem{prop}[thm]{Proposition}
\theoremstyle{definition}
\newtheorem{defn}[thm]{Definition}
\newtheorem{example}[thm]{Example}
\newtheorem{quest}[thm]{Question}
\theoremstyle{remark}
\newtheorem{rem}[thm]{Remark}
\newcommand{\ot}{\otimes}
\newcommand{\op}{\oplus}
\newcommand{\su}{\subseteq}
\newcommand{\ga}{\alpha}
\newcommand{\R}{\mathbb{R}}
\newcommand{\C}{\mathbb{C}}
\renewcommand{\P}{\mathbb{P}}
\newcommand{\Gr}{\mathrm{Gr}}
\newcommand{\im}{\mathrm{Im}}
\newcommand{\Sym}{\mathrm{Sym}}
\newcommand{\rk}{\mathrm{rk}}
\newcommand{\Hom}{\mathrm{Hom}}
\newcommand{\BW}{\mathrm{BW}}
\newcommand{\ED}{\mathrm{ED}}
\newcommand{\vED}{v\mathrm{ED}}
\newcommand{\Seg}{\mathrm{Seg}}
\title{On the Euclidean Distance Degree of Quadratic Two-Neuron Neural Networks}
\author{Giacomo Graziani}
\date{}
\begin{document}

\maketitle
\begin{abstract}
We study the Euclidean Distance degree of algebraic neural network models from the perspective of algebraic geometry. Focusing on shallow networks with two neurons, quadratic activation, and scalar output, we identify the associated neurovariety with the second secant variety of a quadratic Veronese embedding. We introduce and analyze the virtual Euclidean Distance degree, a projective invariant defined as the sum of the polar degrees of the variety, which coincides with the usual Euclidean Distance degree for a generic choice of scalar product. Using intersection theory, Chern–Mather classes, and the Nash blow-up provided by Kempf’s resolution, we reduce the computation of the virtual Euclidean Distance degree to explicit intersection numbers on a Grassmannian. Applying equivariant localization, we prove that this invariant depends stably polynomially on the input dimension. Numerical experiments based on homotopy continuation illustrate the dependence of the Euclidean Distance degree on the chosen metric and highlight the distinction between the generic and nongeneric cases, such as the Bombieri–Weyl metric.
\end{abstract}

\tableofcontents

\section{Introduction}

The algebraic geometry of neurovarieties has attracted increasing attention in recent years. This is due to the central role played by algebraic and semi-algebraic models in Deep Learning, supported by density and universal approximation results \cite{SM17,SM18}, by the existence of global invariants describing the expressive power of architectures \cite{KTB19}, by the relative tractability of the inevitable singularities these varieties exhibit — that are more delicate to work with in a differential-geometric setting \cite{Wat09} — and by the appearance of programmatic works such as \cite{MSMTK25}. A central invariant in Machine Learning is the $\ED$-degree, which measures the number of functions within a model class that optimally fit the training data. This invariant depends sensitively on the choice of a metric on the ambient space; however, for a sufficiently general scalar product its value stabilizes. This general value is shown to be a projective invariant and it is given by the sum of the polar classes of the variety, which we call the virtual $\ED$-degree of the variety $\vED$ (see \cite{DHOST} for a general treatment of this topic).
In this paper we study the virtual $\ED$-invariant, that for the neurovarieties associated to a shallow model with two neurons and scalar output and we use Intersection Theory to show that this value is a stably polynomial function on the dimension of the input space.
Closely related models were studied in \cite{KLW24}, where the authors allow the output dimension to vary and, by exploiting explicit formulas relating the virtual $\ED$-degree to intersection matrices on suitable Grassmannians, obtain closed expressions for $\vED$. 

The paper is organized as follows. After recalling some background material on rational resolutions used in the paper (Section \ref{sec:preliminaries}), we introduce the network under consideration, its building blocks, and the associated varieties (Section \ref{section:Definitions}). 
We then briefly review the norms and scalar products in Section \ref{sec:norms}, with particular emphasis on the Bombieri-Weyl product. In Sections \ref{sec:EDGeneral} and \ref{sec:vED} we apply our discussion to neurovarieties and explicit the relation between the $\ED$-degree and the virtual $\ED$-degree. The material up to this point is standard or well known. Section \ref{sec:Ourcase} is the technical core of the paper. In \ref{sec:polardegrees} we review the basic tools from Intersection theory that allow us to reduce the computation of the virtual $\ED$-degree to an integral on a Grassmannian variety. Finally in \ref{sec:localization} we use the natural action of the algebraic torus on the Grassmannian varieties to reduce the integrals to an  estimate of Edidin-Graham equivariant localization formula. Our statement follows from a direct inspection of the resulting expression.

In Section \ref{sec:numerical} we show an explicit example of the dependence of the $\ED$-degree on the chosen metric. The computations are performed making use of the technique of homotopy continuation implemented in \texttt{Julia}.

\section{Preliminaries}
\subsection{Conventions and notation}\label{sec:preliminaries}
All varieties are defined over the real numbers, while most of the geometric constructions (such as tangent spaces, conormal varieties, and $\ED$-correspondences) are performed on the complexification. Vector spaces are assumed to be finite dimensional. Given a vector space $V$ we denote with $\mathrm{Sym}^r V$ the space of symmetric tensors of order $r$ and with 
$v_r \left( V \right) \su \P\left( \mathrm{Sym}^r V \right)$ the image of the Veronese embedding $\left[ v \right] \mapsto \left[ v^{\ot r} \right]$. For any projective variety $X \subset \mathbb{P}^N$, we denote by $\sigma_k \left( X \right)$ its $k$-th secant variety, defined as the Zariski closure of the union of linear spans of $k$ points on $X$.
In this setting, the variety of symmetric tensors of rank at most $k$ is identified with $\sigma_k \left( v_r \left( V \right) \right)$. We refer to \cite{Lan12} for standard definitions and properties of tensor rank and secant varieties.

\subsection{Resolution of $\sigma_2\left( v_2 \left( V \right) \right)$}
\label{section:resolution}

We recall here some basic properties of rational resolutions of varieties over a field of characteristic 0 and recall the Kempf resolution of $\sigma_2\left( v_2 \left( V \right) \right)$. We refer to \cite{Wey} for further details.

\begin{defn}
Let $X$ be an irreducible projective variety. We say that a birational morphism $f:Z\to X$ is a resolution of singularities if $Z$ is a smooth projective variety. Moreover we say that $X$ has rational singularities if it is normal and there exists a resolution of singularities $f$ with $R^{i}f_{\ast}\mathcal{O}_{Z}=0$ for $i\ge1$.
\end{defn}

\begin{rem}\label{rem:RationalSingularities}
It can be shown that if $X$ has rational singularities using a resolution $f$, then any other resolution of singularities $g$ has the same property. \cite[Proposition 1.2.29]{Wey}
\end{rem}
Fix an $n$-dimensional vector space $V$ and let
\[
\mathcal M = \sigma_2 \left( v_2 \left( V \right) \right) \su
\P\left( \mathrm{Sym}^2 V \right).
\]
In this case it is possible to perform explicit computations using quadratic forms since we have a natural identification 
\[
\mathcal M=\P\left\{Q \in \mathrm{Sym}^2 V^\ast \,\big| \, \mathrm{rk}\left(Q \right)\le 2\right\}=\P\{ A\in\mathrm{Mat}^\mathrm{sym}_{n\times n}\left(\C\right)\,\big| \,\mathrm{rk}\left( A\right)\le2\}.
\]
\begin{rem}
The smooth points of $\mathcal M$ correspond to forms of rank $2$ and we compute the tangent space at those points: if $\C_\varepsilon=\C[\varepsilon]/ \left( \varepsilon^2 \right)$ denotes the ring of dual numbers and $A$ is a rank 2 symmetric form then
\[
T_A\mathcal M = \left\{ B\in\Sym^2 V^\ast\, \big| \, A+\varepsilon B\in \mathcal M\left(  \C_\varepsilon \right) \right\},
\]
that is, all $3\times3$ minors of $A+\varepsilon B$ vanish in $ \C_\varepsilon$. Since $A$ is symmetric of rank $2$, there exists a decomposition $V=U\oplus R$ in which
\[
A = 
\begin{pmatrix}
I_2 & 0\\
0 & 0
\end{pmatrix}
\qquad
B =
\begin{pmatrix}
B_{11} & B_{12}\\
B_{12}^{\mathsf T} & B_{22}
\end{pmatrix},
\]
where
$B_{11}\in\Sym^2 U^\ast, B_{12}\in U^\ast\otimes R^\ast$ and $B_{22}\in\Sym^2 R^\ast$.
Taking $3\times 3$ minors of the form 
\[
\begin{pmatrix}
I_2+\varepsilon\ast & \varepsilon\ast\\
\varepsilon\ast & \varepsilon b_{ij}
\end{pmatrix}.
\]
we see that if $\mathrm{rk}\left( A+\varepsilon B\right) \le2$ then $B_{22}=0$. On the other hand, assuming that $B_{22}=0$ and $\mathrm{rk} A=2$ a simple computation shows that
all $3\times 3$ minors vanish in
$ \C_\varepsilon$, hence
$A+\varepsilon B\in\mathcal M\left(  \C_\varepsilon\right)$.
We conclude that
\begin{equation}\label{eq:TangentDeterminantal}
T_A\mathcal M = \left\{
B\in\Sym^2 V^\ast \big| 
B_{22}=0 \right\}
=
\left\{ B\in\Sym^2 V^\ast \big| B\left( \ker\left( A \right) \right)
\subseteq \mathrm{Im}\left( A \right)
\right\}.
\end{equation}
\end{rem}
Recall from \cite[Section 5]{DHOST} that, once we fixed a scalar product on the affine space $\C^{N+1}$, the conormal variety $\mathcal N _X$ of $X\su\P^{N}$ with affine cone $\tilde{X}\su \C^{N+1}$ is the projection to $\P^N \times\P^N$ of
\[
\overline{\{\left(x,y\right)\in\C^{N+1} \times\C^{N+1}\,\big| \,x\in \tilde{X}_\mathrm{reg}\, ,y\perp T_x \tilde{X}\} }
\]
Let $G=\Gr_2 \left( V \right)$ be the Grassmannian of $2$-dimensional
subspaces of $V$ and let $\mathcal{U}$ be its tautological bundle.
Define $Z=\P\left( \mathrm{Sym}^2 \mathcal{U} \right)$. The natural inclusion
$\mathrm{Sym}^2 \mathcal{U} \to \mathrm{Sym}^2 V \otimes \mathcal O_G$
induces a morphism $\pi: Z \to \P\left( \mathrm{Sym}^2 V \right)$ whose image is $\mathcal{M}$. Explicitly 
\begin{equation}\label{eq:KempfResolution}
Z=\left\{ \left( \left[ Q \right], H \right) \in \P\left( \mathrm{Sym}^2 V \right) \times\Gr_2 \left( V \right)  \big| \left[ Q \right] \in \P\left( \mathrm{Sym}^2 H \right)\right\}.
\end{equation}
It is a classical result (see \cite[Chapter 6]{Wey}) that $\pi$ is a
rational resolution of singularities, usually called the Kempf's resolution of $\mathcal M$. In particular $R^{i}\pi_{\ast}\mathcal{O}_{Z}=0$ for all $i>0$.

\section{Neurovarieties with scalar output and the $\ED$-degree}
\label{sec:neurovarieties}

We introduce here the central notions of the paper. The definitions and results in this section are stated in a slightly greater generality than what will be strictly needed later on. This choice is motivated by the fact that we think such a level of generality provides a clearer geometric picture and helps place our main result in the broader context of Euclidean distance degree theory. Further details can be found in \cite{KTB19, KMMT22, Sha24}. See also \cite{KLW24} for a comprehensive introduction to polynomial neural networks and numerous examples.

\subsection{Definitions}
\label{section:Definitions}

\begin{defn}
Let $X$ be a real vector space, $k,r\ge1$ be integers and set
\[
\mathcal{P}_{k} = \Hom\left( X,\R^{k} \right) \oplus
\Hom\left( \R^{k} , \R \right).
\]
An algebraic neural network with one hidden layer of width $k$,
degree $r$ and scalar output is the assignment
\begin{align*}
\Lambda:\mathcal{P}_{k} &\longrightarrow \mathcal{C}\left( X,\R \right)\\
\theta=\left( \phi_{1},\phi_{2} \right) &\longmapsto f_{\theta} = \phi_{2}\circ\underline{\rho}\circ\phi_{1}
\end{align*}
where
\[
\underline{\rho}
\left(
t_{1}{\mathbf e}_{1}+\dots+t_{k}{\mathbf e}_{k}
\right)
=
t_{1}^{r}{\mathbf e}_{1}+\dots+t_{k}^{r}{\mathbf e}_{k}
\]
and $\mathcal{C}$ denotes the space of continuous functions. The functional space of the neural network, denoted by $\mathcal{N}_{k,r}$, is the image of the parameter space under this map:
\[
\mathcal{N}_{k,r} = \im\left( \Lambda \right) \subseteq \mathcal{C}\left( X,\R \right).
\]
\end{defn}

\begin{rem}
It is important to work with fixed basis elements in the hidden layer
since the map $f_{\theta}$ depends on such choice. These basis elements
are called the neurons of the network. The functions in
$\mathcal{N}_{k,r}$ can be described as follows: there exist linear
forms $L_{1},\dots,L_{k}$ on $X$ and scalars $y_{1},\dots,y_{k}\in \R$
such that
\begin{equation}
f_{\theta}\left( x \right)
=
\sum_{i=1}^{k} y_{i} L_{i}\left( x \right)^{r}.
\label{eq:f_thetaExplicit}
\end{equation}
In particular we can identify $f_{\theta}$ with an element
\begin{equation}
f_{\theta}
\in
\mathrm{Sym}^{r}\left( X^{\ast} \right)
\quad\text{with symmetric rank at most } k.
\label{eq:PartiallySymmetricTensor}
\end{equation}
\end{rem}

\begin{defn}
Let
\[
\mathcal{P}_{k} = \Hom\left( X,\R^{k} \right) \oplus \Hom\left( \R^{k} , \R \right)
\]
be the parameter space. We define the neuroalgebraic variety (or neural
variety) $\mathcal{M}_{k,r}$ as the Zariski closure of the image of the
parametrization map inside the projective space of tensors
\[
\mathcal{M}_{k,r} := \overline{ \P\left( \im\left( \Lambda \right) \right) } \subseteq \P\left(  \mathrm{Sym}^{r}\left( X^{\ast} \right) \right).
\]
\end{defn}
In order to compactify the parameter space we decompose it as a sum
of parameter spaces one for each of the $k$ neurons:
\[
\mathcal{P}_{k} = \Hom\left( X,\R^{k} \right) \oplus \Hom\left( \R^{k} , \R \right) \simeq \bigoplus_{i=1}^{k} \left( X^{\ast} \op \R \right)
\]
and take the projectivization accordingly.

\begin{rem}
    Even if we will not strictly need it, an important role in the theory is played by the compactification of the parameter space
\[
\mathcal{W}_{k} = \P\left( X^{\ast} \op \R \right)\times\dots\times\P\left( X^{\ast} \op \R \right)
= \P\left( V_{1} \right)\times\dots\times\P\left( V_{k} \right)
\]
where the notation $\P\left( V_{i} \right)$ is used to emphasize that the space is the space of parameters of the $i$-th neuron. In fact in view of \eqref{eq:f_thetaExplicit} we see that we have a rational dominant map
\[
\Lambda:\mathcal{W}_{k}\dasharrow\mathcal{M}_{k,r},
\]
hence making the variety $\mathcal{M}_{k,r}$ unirational. This is the starting point of a range of numerical approaches that use $\Lambda$ to pull back equations from $\mathcal M _{k,r}$ to $\mathcal W _k$ which is just a product of linear spaces. The main problems arise in view of the high ramification of $\Lambda$ such as the action of $S_k$ on the fibres given by interchanging neurons. See \cite{DHOST} for a general introduction to this idea and \cite{BFHP25} for a concrete example of such application.
\end{rem}

In view of \eqref{eq:PartiallySymmetricTensor}, it is clear that $\mathcal{M}_{k,r}$ naturally identifies with the $k$-th secant variety of the Veronese
\begin{equation}
\sigma_{k}\left(v_{r}\left( X^{\ast} \right)\right)\su\P\left(\mathrm{Sym}^{r}\left( X^{\ast} \right)\right).
\end{equation}

\subsection{Norms and scalar products}\label{sec:norms}

In this section we define the norms and the scalar products that we
will use to define the loss functions. Motivated by the minimality results in \cite{KMQS25} we focus on the Bombieri-Weyl construction of the scalar product on the space of symmetric tensors. Following \cite[Chapter 6]{BFHP25}
let $\underline{\ga}=\left( \ga_{1},\dots,\ga_{n} \right)$ be a multi-index
with $\left| \underline{\ga} \right|=\ga_{1}+\dots+\ga_{n}=d$, then
we set
\[
\binom{d}{\underline{\ga}}
=
\frac{d!}{\ga_{1}!\dots\ga_{n}!}.
\]
Let now $V$ be a scalar product space and fix an orthonormal basis
$\left\{ e_{1},\dots,e_{n} \right\}$ of $V$. Any symmetric tensor
$f\in\mathrm{Sym}^{d}\left( V \right)$ can be uniquely written as:
\[
f
=
\sum_{\left| \underline{\alpha} \right|=d}
\binom{d}{\underline{\alpha}}
f_{\underline{\alpha}}
e^{\underline{\alpha}}.
\]
where $e^{\underline{\alpha}}$ denotes the symmetrized tensor product\footnote{With this we mean 
\[
e_{1}^{\alpha_{1}}\dots e_{n}^{\alpha_{n}} = \frac{1}{d!} \sum \left( \mbox{all permutations of the \ensuremath{d} tensors
\ensuremath{e_{1}^{\ot\ga_{1}}\ot\dots\ot e_{n}^{\ot\ga_{n}}}}
\right)
\]
}
$e_{1}^{\alpha_{1}}\dots e_{n}^{\alpha_{n}}$ and
$f_{\underline{\alpha}}\in\mathbb{R}$.

\begin{defn}
Let $f,g\in\mathrm{Sym}^{d}\left( V \right)$, we define their
Bombieri-Weyl product (also called BW-product)
\[
\left\langle f,g \right\rangle
=
\left\langle f,g \right\rangle_{\mathrm{BW}}
=
\sum_{\left| \underline{\ga} \right|=d}
\binom{d}{\underline{\ga}}
f_{\underline{\ga}} g_{\underline{\ga}}.
\]
\end{defn}

\begin{lem}
\label{lem:BWInnerProductIndependent}
The Bombieri-Weyl product is a scalar product, that is, it is
symmetric, linear in both variables, and positive-definite.
Moreover, it does not depend on the choice of the orthonormal basis.
\end{lem}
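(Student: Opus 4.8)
The plan is to observe first that, with the orthonormal basis $\{e_{1},\dots,e_{n}\}$ of $V$ fixed, all the algebraic properties are transparent: if $f=\sum_{|\underline{\ga}|=d}\binom{d}{\underline{\ga}}f_{\underline{\ga}}e^{\underline{\ga}}$ and $g=\sum_{|\underline{\ga}|=d}\binom{d}{\underline{\ga}}g_{\underline{\ga}}e^{\underline{\ga}}$, then $\langle f,g\rangle_{\mathrm{BW}}=\sum_{\underline{\ga}}\binom{d}{\underline{\ga}}f_{\underline{\ga}}g_{\underline{\ga}}$ is visibly symmetric and bilinear, and $\langle f,f\rangle_{\mathrm{BW}}=\sum_{\underline{\ga}}\binom{d}{\underline{\ga}}f_{\underline{\ga}}^{2}$ is a positive combination of squares of the coordinates of $f$, hence $\geq 0$ with equality only for $f=0$ (the $e^{\underline{\ga}}$ being a basis of $\mathrm{Sym}^{d}V$ by the uniqueness of the expansion recalled above). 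So the substantive point is the independence from the chosen orthonormal basis.

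For that, the strategy is to realize the BW product intrinsically. I would embed $\mathrm{Sym}^{d}V\su V^{\ot d}$ as the subspace of symmetric tensors and equip $V^{\ot d}$ with its canonical inner product, the one determined by $\langle v_{1}\ot\dots\ot v_{d},w_{1}\ot\dots\ot w_{d}\rangle=\prod_{i=1}^{d}\langle v_{i},w_{i}\rangle$. This inner product depends only on the inner product of $V$, with no reference to any basis. The claim then reduces to the assertion that the restriction of $\langle\,\cdot,\cdot\,\rangle_{V^{\ot d}}$ to $\mathrm{Sym}^{d}V$ equals $\langle\,\cdot,\cdot\,\rangle_{\mathrm{BW}}$. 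Granting this, carrying out the same identification for any other orthonormal basis of $V$ produces the same restricted product, and basis-independence (as well as, a posteriori, positive-definiteness) follows.

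The remaining step is the comparison computation. Using the normalization in the footnote, $e^{\underline{\ga}}$ is $\binom{d}{\underline{\ga}}^{-1}$ times the sum of the $\binom{d}{\underline{\ga}}$ pairwise orthonormal ``words'' $e_{j_{1}}\ot\dots\ot e_{j_{d}}$ in which each $e_{i}$ occurs $\ga_{i}$ times. Hence $\langle e^{\underline{\ga}},e^{\underline{\beta}}\rangle_{V^{\ot d}}=0$ when $\underline{\ga}\neq\underline{\beta}$ (the corresponding word sets are disjoint) and $\langle e^{\underline{\ga}},e^{\underline{\ga}}\rangle_{V^{\ot d}}=\binom{d}{\underline{\ga}}^{-1}$. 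Substituting the expansions of $f$ and $g$ then gives $\langle f,g\rangle_{V^{\ot d}}=\sum_{\underline{\ga}}\binom{d}{\underline{\ga}}f_{\underline{\ga}}\binom{d}{\underline{\ga}}g_{\underline{\ga}}\binom{d}{\underline{\ga}}^{-1}=\langle f,g\rangle_{\mathrm{BW}}$. An equivalent and perhaps slicker packaging, which I might use instead, is the identity $\langle v^{\ot d},w^{\ot d}\rangle_{\mathrm{BW}}=\langle v,w\rangle^{d}$, a one-line consequence of the multinomial expansion of $v^{\ot d}$ in the basis $\{e^{\underline{\ga}}\}$; since in characteristic $0$ the powers $v^{\ot d}$ span $\mathrm{Sym}^{d}V$ and the right-hand side is manifestly basis-free, bilinearity pins down $\langle\,\cdot,\cdot\,\rangle_{\mathrm{BW}}$ independently of the chosen orthonormal basis.

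The comparison computation is the only place where anything has to be checked, and it is entirely routine; the sole point demanding attention is keeping the multinomial factors $\binom{d}{\underline{\ga}}$ and the symmetrization convention of the footnote straight, so that the normalizations cancel exactly as claimed. I do not anticipate any conceptual obstacle.
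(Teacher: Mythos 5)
Your proof is correct, and your primary route is genuinely different from the paper's. The paper proves basis-independence by noting that powers of linear forms span $\mathrm{Sym}^{d}(V)$, establishing the key identity $\langle v^{d},w^{d}\rangle_{\mathrm{BW}}=\langle v,w\rangle_{V}^{d}$ via the multinomial theorem (your ``slicker packaging'' at the end is essentially this argument), and then checking explicitly that an orthogonal change of variables $Q$ preserves the right-hand side. Your main argument instead realizes $\langle\,\cdot,\cdot\,\rangle_{\mathrm{BW}}$ as the restriction to $\mathrm{Sym}^{d}V\subset V^{\otimes d}$ of the canonical inner product on $V^{\otimes d}$; your bookkeeping with the footnote's symmetrization convention is right ($e^{\underline{\ga}}$ is $\binom{d}{\underline{\ga}}^{-1}$ times the sum of the $\binom{d}{\underline{\ga}}$ distinct orthonormal words, so $\langle e^{\underline{\ga}},e^{\underline{\ga}}\rangle_{V^{\otimes d}}=\binom{d}{\underline{\ga}}^{-1}$ and the normalizations cancel as you claim). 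Your approach buys positive-definiteness and basis-independence simultaneously and ``for free,'' since both are inherited from the manifestly intrinsic tensor-product metric, whereas the paper's approach isolates the identity $\langle v^{d},w^{d}\rangle_{\mathrm{BW}}=\langle v,w\rangle_{V}^{d}$ (its equation \eqref{eq:CompatibilityBWInnerProduct}), which is a useful reproducing-kernel-type fact in its own right. Either argument is complete; no gaps.
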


\begin{proof}
We include the proof for lack of reference. The first part is clear. To see that the Bombieri-Weyl scalar product
is invariant under orthogonal changes of basis let $Q$ be an
orthogonal transformation on $V$ and let
$f\in\mathrm{Sym}^{d}\left( V \right)$,
let moreover $n=\dim V$. Denote by $f\circ Q$ the polynomial defined
by
\[
\left( f\circ Q \right)\left( x \right)
=
f\left( Qx \right).
\]
Since the Veronese map is non-degenerate the set of powers of linear
forms $\left\{ v^{d}\mid v\in V \right\}$ spans the space
$\mathrm{Sym}^{d}\left( V \right)$. By the linearity of the scalar
product, it suffices to prove the invariance for elements of this
form. For $v\in V$ we identify it with the linear map it defines via
duality
\[
v\left( x \right)
=
\left\langle v,x \right\rangle_{V},
\]
hence the symmetric power is the polynomial
\[
v^{d}\left( x \right)
=
\left\langle v,x \right\rangle_{V}^{d}.
\]
Let $v,w\in V$ then applying the definition of the product and the
Multinomial Theorem:
\begin{equation}
\left\langle v^{d},w^{d} \right\rangle_{\mathrm{BW}}
=
\sum_{\left| \alpha \right|=d}
\binom{d}{\alpha}
v^{\alpha} w^{\alpha}
=
\left( \sum_{i=1}^{n} v_{i} w_{i} \right)^{d}
=
\left\langle v,w \right\rangle_{V}^{d}.
\label{eq:CompatibilityBWInnerProduct}
\end{equation}
Now consider the action of the orthogonal transformation $Q$. The
polynomial transforms as
\[
\left( v^{d} \right)\circ Q
=
\left( Q^{T}v \right)^{d}.
\]
Therefore:
\[
\left\langle
\left( v^{d} \right)\circ Q,
\left( w^{d} \right)\circ Q
\right\rangle_{\mathrm{BW}}
=
\left\langle
\left( Q^{T}v \right)^{d},
\left( Q^{T}w \right)^{d}
\right\rangle_{\mathrm{BW}}
=
\left\langle Q^{T}v,Q^{T}w \right\rangle_{V}^{d}.
\]
Since $Q$ is orthogonal,
\[
\left\langle Q^{T}v,Q^{T}w \right\rangle_{V}
=
\left\langle v,w \right\rangle_{V}.
\]
Thus
\[
\left\langle
\left( Q^{T}v \right)^{d},
\left( Q^{T}w \right)^{d}
\right\rangle_{\mathrm{BW}}
=
\left\langle v^{d},w^{d} \right\rangle_{\mathrm{BW}}.
\]
This proves the invariance.
\end{proof}

Let now $X,Y$ be finite dimensional vector spaces and suppose they have
scalar products
$\left\langle \bullet \right\rangle_{X}$ and
$\left\langle \bullet \right\rangle_{Y}$ and consider the natural
product
$\left\langle \bullet \right\rangle_{X^{\ast}}$ on $X^{\ast}$.
In view of the above construction we can define a natural scalar product
on
$Y\ot \mathrm{Sym}^{r}\left( X^{\ast} \right)$
taking
\[
\left\langle
y\ot\underline{x},
y'\ot\underline{x}'
\right\rangle
=
\left\langle y,y' \right\rangle_{Y}
\cdot
\left\langle \underline{x},\underline{x}' \right\rangle_{\mathrm{BW}}
\]
and extending it by linearity. For an element
$f\in Y\ot \mathrm{Sym}^{r}\left( X^{\ast} \right)$
its norm is defined in the usual way
\[
\left\Vert f \right\Vert^{2}
=
\left\langle f,f \right\rangle.
\]

A fundamental notion in the study of neuroalgebraic varieties is that of isotropic quadric associated with a scalar product: since algebraic objects arise as complexification of their counterparts defined over $\R$, with scalar product we mean the complex-bilinear extension of a real positive definite inner product. In particular, hermitian forms do not appear in this context.
\begin{defn}
    Let $q$ be a non degenerate scalar product on $\C^{n+1}$, the isotropic quadric associated with $q$ is the hypersurface $Q\su\P^n$ defined as the zero locus of the quadratic form associated with $q$.
\end{defn}

For a fixed choice of a basis there is a one-to-one correspondence between smooth quadric hypersurfaces $Q\su\P^n$ with no real points and nondegenerate scalar products on $\C^{n+1}$ up to multiplication by a scalar.

\begin{example}\label{exa:BombieriWeyl}
    After fixing a basis $e_1,\dots, e_n$ of $\C^n$ we identify $A\in\Sym^2 \C^n$ with its sequence of entries $\left(a_{ij}\right)$, then the isotropic quadric associated with the Bombieri-Weyl product is
    \[
    Q_\BW=\left( \sum_{i=1} ^{n} x_{ii} ^2 + 2\sum_{i<j}x_{ij}^2 =0 \right)\su\P\left( \Sym^2 \C^n\right)
    \]
\end{example}

\subsection{The $\protect\ED$-degree and the $\protect\ED$-correspondence}\label{sec:EDGeneral}

For this section fix a nondegenerate scalar product $q$ on $\C^{N+1}$ with isotropic quadric $Q\su\P^N$. We will refer to $q$ and $Q$ interchangeably.

\begin{defn}
Let $X\su\R^{N}$ be a reduced and irreducible algebraic variety with a smooth real point, denote with $X_{\C}\su\C^{N}$ its complexification and let $u\in\C^{N}$. We say that $x\in X_{\C,{\mathrm{reg}}}$ is $Q$-critical with respect to $u$, or just critical if no ambiguity can arise, if $x-u$ is orthogonal to $T_{x}X_{\C}$ with respect to $Q$.
\end{defn}

It is clear that this definition makes sense only for $u\not\in X_{\C}$. Following \cite[Theorem 4.1]{DHOST} and the subsequent discussion let 
\[
\mathcal{E}_{X,Q}=\left\{ \left(x,u\right)\in X_{\C,{\mathrm{reg}}}\times\C^{N}\,\big| \,\mbox{\ensuremath{x} is critical wrt \ensuremath{u}}\right\} 
\]
then the first projection $\pi_{1}:\mathcal{E}_{X,Q}\to X_{\C,{\mathrm{reg}}}$
identifies the fibre $\pi_{1}^{-1}\left(x\right)=x+N_{x}X$ with the
fibre of the affine normal bundle, hence $\pi_{1}$ is an affine bundle
on $X_{\C,{\mathrm{reg}}}$ of rank $c$, hence $\mathcal{E}_{X,Q}$ has
dimension $N$, moreover the second projection $\pi_{2}:\mathcal{E}_{X,Q}\to\C^{N}$
is then dominant between varieties of the same dimension\footnote{This is a consequence of the existence of a smooth real point, see \cite[Theorem 4.1]{DHOST}.}: it is generically finite and the fibres have the same cardinality
for generic $u\in\C^{N}$. Therefore it makes sense to define 
\begin{defn}\label{def:EDDegree}
With setting and notations as before, we define $\ED_Q\left(X\right)$
as the cardinality of the generic fibre of the projection $\pi_{2}:\mathcal{E}_{X,Q}\to\C^{N}$,
called the $\ED$-degree of $X$ with respect to $Q$. If $X\su\P\left(V\right)$ is a
reduced and irreducible real projective variety we define $\ED_Q\left(X\right)$
as the $\ED$-degree of its affine cone $C\left(X_{\C}\right)\su V\ot\C$ with respect to $Q$. The bundle $\pi=\pi_{1}:\mathcal{E}_{X,Q}\to X_{\C,\mathrm{reg}}$ is
called the $\ED$-correspondence of $X$ with respect to $Q$. We will denote the $\ED$-degree of the $k$-th secant variety of the Segre product $\Seg\left(\P\left( \R^m\right) \times v_r \left(\R^{m,\ast}\right) \right)$ with respect to $Q$ as
\[
\ED_Q\left( m,n,k,r\right).
\]
\end{defn}

\begin{rem}\label{rem:ED}
Some Remarks are in order:
\begin{enumerate}
\item The notion of criticality of a point depends only on the projective class of the scalar product, therefore it makes sense to refer to the isotropic quadric in the definition even if $\ED_Q$ is computed using the affine cone;
\item in view of \cite{DHOST} the number $\ED_Q\left(X\right)$ counts the
number of complex critical points of the distance function
\[
d_{u}\left(x\right)=\left\Vert x-u\right\Vert ^{2}:X_{\C}\to\C
\]
for a general $u\in\C^{N}\backslash X_{\C,{\mathrm{reg}}}$. The number
of such real points is usually smaller.
\item \label{item:isotropicQuadric} the definition in the case of a projective variety has some complication which will not affect us in this paper: one can show (see \cite[Lemma 2.8 and Corollary 2.9]{DHOST}) that this definition if well posed in case $X$ is not contained in the isotropic quadric. This is always the case for us since our varieties are real.
\item While the variety $X$ is defined over $\R$, the bundle $\mathcal{E}_{X,Q}$
is defined over the complexification. If $X\su\P\left(V\right)$ is
projective then the map $V\backslash\left\{ 0\right\} \to\P\left(V\right)$
induced a vector bundle over $X_{\C,\mathrm{reg}}$ which by abuse
of notations, we still denote with
\[
\mathcal{E}_{X,Q}\to X_{\C,\mathrm{reg}}.
\]
When the variety $X$ is smooth, for example in the case when $X=\mathcal{M}_{k,r}$
then $\mathcal{E}_{X,Q}$ is the total space of the normal bundle of
the embedding.
\end{enumerate}
\end{rem}

\subsection{The virtual $\ED$-degree}\label{sec:vED}

While the $\ED$-degree introduced in Section \ref{sec:EDGeneral} is a metric invariant of the variety, we introduce here a purely projective analogue. We fix a nondegenerate scalar product $q$ with isotropic quadric $Q$.

\begin{defn}
Let $X\su \P^N$ be a reduced and irreducible variety, then the conormal variety of $X$ is defined as 
\[
\mathcal N _X = \overline{\{ \left(x,H\right)\in \P^N \times \left( \P^N\right)^\ast\,\big| \, x\in X_\mathrm{reg}\, , T_x X \su H\} }
\]
where the closure is taken in $\mathbb P^N \times (\mathbb P^N)^\ast$. It comes with the two projections
\[
\pi_X : \mathcal{N}_X \to \P^N\quad\quad \pi_{ X^\ast }: \mathcal{N}_X \to\left( \P^N\right)^\ast.
\]
\end{defn}

\begin{rem}\label{rem:RelationsEDcorrespondenceConormal}
    The nondegenerate scalar product on $\C^{N+1}$ determines an identification $\P^N \simeq \left( \P^N \right)^\ast$ and it is then possible to see $X$ and $X^\ast$ inside the same projective space via
    \[
\mathcal N _X = \overline{\{ \left(x,y\right)\in \P^N \times \P^N\,\big| \, x\in X_\mathrm{reg}\, , y\perp _q T_x X \} }.
\]
This is the point of view adopted in \cite{DHOST}. In particular, if we denote with $\P\left( \mathcal E ^{(u)} _{X,Q}\right)$ the projectivization of the $\ED$-correspondence for a generic $u\in\C^{N+1}$ we have
\[
\P\left( \mathcal E ^{(u)} _{X,Q}\right) =\{ \left( x,[y] \right)\in X_\mathrm{reg} \times \P^N \,\big| \,y\perp _q T_x X\, ,\,y\in\langle x-u\rangle \}.
\]
It follows that, if $\Gamma _{u,Q} \su \P^N \times\left( \P^N \right)^\ast$ is the closure of the graph of $x\mapsto H_u (x)$ where $H_u (x)$ is the hyperplane corresponding to $x-u$,
\[
\P\left( \mathcal E ^{(u)} _{X,Q}\right) = \mathcal N _X\cap\Gamma_{u,Q}
\]
where we used the scalar product to identify $\P^N \simeq\left( \P^N \right)^\ast$. Note that $\Gamma _{u,Q}$ is rationally equivalent to the diagonal $\Delta\su \P^N \times \P^N$.
\end{rem}

It is clear that $\pi_X$ defines a morphism $\pi_X : \mathcal{N}_X \to X$ and that the restriction 
\[
\mathcal{N}_ {X_\mathrm{reg}} = \pi _X ^{-1} \left( X_\mathrm{reg}\right) \to X_\mathrm{reg}
\]
is the projectivized conormal bundle parametrizing hyperplanes tangent to $X$. We call the image  $\pi_{ X^\ast }\left( \mathcal{N}_X\right)\su \left( \P^N\right)^\ast$ the dual variety of $X$, denoted $X^\ast$. If $\dim X =d$ then for a smooth point $x\in X$ the fibre $\pi^{-1} _X (x)$ is the projective space of hyperplanes containing $T_x X$ hence has dimension $N-d-1$. It follows that $\mathcal N_X$ is an irreducible variety of dimension $N-1$. Its conormal cycle, which by an abuse of notation we still denote with $\mathcal{N} _X$, admits a multidegree decomposition in the Chow ring
\[
\left[ \mathcal{N} _X \right]\in A
^\bullet\left( \P^N \times\left(\P^N \right) ^\ast\right)\simeq\frac{\mathbb Z \left[x,y\right]}{\left( x^{N+1},y^{N+1}\right)}
\]
where $x,y$ denote the hyperplane classes of the two factors and it makes sense to give the following definition.
\begin{defn}
    With setting and notations as above define $\delta_i \left( X\right)\in\mathbb Z$ as
    \[
    \left[ \mathcal N _X \right] = \sum _{i=0} ^{d} \delta_i \left( X\right)x^{d-i} y^{N-1 -d+i}.
    \]
    The numbers $\delta_i \left( X\right)$ are called the polar degrees of $X$ and we define the virtual Euclidean Distance degree of $X$ as
    \[
    \vED \left(X\right)=\sum _{i=0} ^{d} \delta_i \left( X\right).
    \]
    We will denote the $\vED$-degree of the $k$-th secant variety of the Segre product $\Seg\left(\P\left( \R^m\right) \times v_r \left(\R^{m,\ast}\right) \right)$ as
\[
\vED\left( m,n,k,r\right).
\]
\end{defn}

\begin{prop}\label{prop:VirtualEDInequality}
Let $X \subset \mathbb{P}^{N}$ be a reduced and irreducible variety of dimension $d$.
Fix a nondegenerate scalar product on $\C^{N+1}$ with isotropic quadric $Q$, then $$\ED_Q(X)\le \vED(X).$$
\end{prop}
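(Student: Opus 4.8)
The plan is to compare the intersection-theoretic count giving $\vED(X)$ with the geometric count giving $\ED_Q(X)$, using the description of $\P(\mathcal E^{(u)}_{X,Q})$ from Remark \ref{rem:RelationsEDcorrespondenceConormal}. Concretely, I would first record that $\ED_Q(X)$ equals the number of points (counted with multiplicity, but actually all of multiplicity one for generic $u$) in the intersection $\mathcal N_X \cap \Gamma_{u,Q}$ inside $\P^N \times (\P^N)^\ast$, where the identification $\P^N \simeq (\P^N)^\ast$ comes from $q$; this is exactly the content of that Remark together with Definition \ref{def:EDDegree}, once one checks that for generic $u$ the critical points are smooth points of $X$ lying away from $Q$ and that the intersection is transverse there.

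Next I would invoke that $\Gamma_{u,Q}$ is rationally equivalent to the diagonal $\Delta \subset \P^N \times \P^N$, which in the Chow ring $A^\bullet(\P^N \times (\P^N)^\ast) \simeq \mathbb Z[x,y]/(x^{N+1}, y^{N+1})$ has class $[\Delta] = \sum_{j=0}^{N} x^j y^{N-j}$. Pairing this with the conormal cycle class $[\mathcal N_X] = \sum_{i=0}^{d} \delta_i(X)\, x^{d-i} y^{N-1-d+i}$, only the terms with complementary exponents survive (those summing to $x^N y^N$), and each contributes exactly $\delta_i(X)$; hence the degree of the zero-cycle $[\mathcal N_X]\cdot[\Delta]$ equals $\sum_{i=0}^{d}\delta_i(X) = \vED(X)$. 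So $\vED(X)$ is the \emph{total} intersection number of $\mathcal N_X$ with any translate of the diagonal in the rational equivalence class of $\Gamma_{u,Q}$.

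The inequality then follows because the actual number of geometric points in $\mathcal N_X \cap \Gamma_{u,Q}$ can only be less than or equal to this intersection number: the set-theoretic intersection may contain a component sitting inside the ``bad locus'' — the excess or improper part supported over singular points of $X$, or over the hyperplane-at-infinity piece $\pi_X^{-1}(X \cap Q)$, or wherever $\mathcal N_X$ meets the boundary divisor $X^\ast \cap$ (isotropic quadric in the dual). Such extra intersection, which is nonnegative by positivity of the intersection of effective cycles on the smooth projective variety $\P^N \times \P^N$, absorbs part of the total degree $\vED(X)$, leaving only $\ED_Q(X)$ honest isolated critical points. Making this precise is where I expect the main obstacle: one must argue that for a \emph{generic} $u$ the isolated contributions each count with multiplicity one and all lie in $X_{\mathrm{reg}} \setminus Q$ (so they genuinely contribute to $\mathcal E_{X,Q}$), while any loss $\vED(X) - \ED_Q(X)$ is accounted for by effective excess components — rather than, say, by some cancellation of signs, which cannot happen precisely because all cycles involved are effective and the ambient space is smooth. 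The cleanest route is to cite the analysis of the $\ED$-correspondence in \cite[Section 5]{DHOST}, where this generic transversality and the identification of the excess locus is carried out; the statement here is then just the numerical consequence of $\deg([\mathcal N_X]\cdot[\Gamma_{u,Q}]) = \vED(X)$ together with $\Gamma_{u,Q}$ meeting $\mathcal N_X$ properly away from a proper closed subset, all contributions being nonnegative.
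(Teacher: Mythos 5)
Your overall strategy is the same as the paper's: identify the generic fiber of the $\ED$-correspondence with $\mathcal N_X\cap\Gamma_{u,Q}$ via Remark \ref{rem:RelationsEDcorrespondenceConormal}, replace $[\Gamma_{u,Q}]$ by $[\Delta]$ in the Chow ring, compute the resulting intersection number to be $\sum_i\delta_i(X)=\vED(X)$, and observe that $\ED_Q(X)$ counts only a subset of the contributions, so it is bounded above by the total. Your closing discussion of why no sign cancellation can occur (effectivity of all the cycles involved) is a more explicit version of the paper's one-line conclusion and is fine, modulo deferring the generic transversality statements to \cite{DHOST} as the paper also effectively does.

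There is, however, one concrete step that fails as written: the claim that $[\mathcal N_X]\cdot[\Delta]$ is a zero-cycle of degree $\sum_i\delta_i(X)$. With the paper's normalization $[\mathcal N_X]=\sum_i\delta_i(X)\,x^{d-i}y^{N-1-d+i}$, a monomial of $[\mathcal N_X]\cdot[\Delta]$ contributing to $x^Ny^N$ would have to satisfy both $(d-i)+j=N$ and $(N-1-d+i)+(N-j)=N$, and these two conditions are incompatible (they differ by one); every monomial of the product has total degree $2N-1$, so the coefficient of $x^Ny^N$ is zero and the product is not a zero-cycle. The paper instead computes $\deg\bigl([\mathcal N_X]\cdot[\Delta]\cdot x\bigr)$, with an extra factor of the hyperplane class $x$; with that factor the two exponent conditions coincide ($j=N-1-d+i$), exactly one $j$ works for each $i=0,\dots,d$, and the degree is $\sum_i\delta_i(X)$. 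This extra hyperplane section reflects the one remaining linear condition needed to cut the projectivized $\ED$-correspondence over $u$ down to a finite set, so the quantity to compare with $\ED_Q(X)$ is $\deg\bigl([\mathcal N_X]\cdot[\Gamma_u]\cdot x\bigr)$, not $\deg\bigl([\mathcal N_X]\cdot[\Gamma_u]\bigr)$. Once you restore this factor, the rest of your argument goes through and coincides with the paper's proof.
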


\begin{proof}
In $A^{N}(\mathbb P^N\times\mathbb P^N)$ the diagonal has class $[\Delta] = \sum_{j=0}^{N} x^{j}y^{N-j}$ hence
\[
[\mathcal N_X]\cdot[\Delta]\cdot x
=
\left(\sum_{i=0}^{d} \delta_i(X)\, x^{d-i} y^{N-1-d+i}\right)
\left(\sum_{j=0}^{N} x^{j}y^{N-j}\right)x.
\]
The only monomials contributing to the coefficient of $x^{N}y^{N}$ are those for which
\[
(d-i)+j+1 = N
\qquad\text{and}\qquad
(N-1-d+i)+(N-j)=N,
\]
equivalently $j=N-1-d+i$. For each $i=0,\dots,d$ there is exactly one such $j$, hence
\[
\deg\bigl([\mathcal N_X]\cdot[\Delta]\cdot x\bigr)
=
\sum_{i=0}^{d}\delta_i(X)
= \vED(X).
\]
In view of Remark \ref{rem:RelationsEDcorrespondenceConormal} we see that then $\vED(X)=\deg\bigl([\mathcal N_X]\cdot[\Gamma _u]\cdot x\bigr)$, but the $\ED$-degree counts only those points counted by $\deg\bigl([\mathcal N_X]\cdot[\Gamma _u]\cdot x\bigr)$ that are also affine critical points in $X_{\mathbb C,\mathrm{reg}}$, hence  
$$\ED_Q(X)\le \deg\bigl( [\mathcal N_X]\cdot [\Gamma_u] \cdot x\bigr) =\deg\bigl( [\mathcal N_X]\cdot [\Delta] \cdot x \bigr)= \vED(X).$$
\end{proof}

\begin{rem}\label{rem:Diagonal}
The inequality in Proposition \ref{prop:VirtualEDInequality} can be strict, see for example  \cite[Theorem 6.11]{DHOST} and the subsequent examples \cite[Examples 6.12, 6.13]{DHOST}. Sufficient conditions for the inequality to be an equality are that the isotropic quadric associated with the scalar product has transverse intersection with $X$ and it is disjoint from $X_\mathrm{sing}$ or, more generally, that $\mathcal N _X$ does not intersect the diagonal $\Delta\su\P^N \times\P^N$ set theoretically (see \cite[Theorem 5.4]{DHOST}).
\end{rem}

Although well-known, we include a proof of the following Proposition for completeness, as we could not find a clean reference in the singular setting.

\begin{prop}\label{prop:generalScalarProduct}
Let $X \subset \mathbb{P}^{N}$ be a reduced and irreducible variety. There exists a nonempty Zariski open subset $U$ in the space of nondegenerate symmetric forms on $\mathbb{C}^{N+1}$ such that for any scalar product $q \in U$, the associated conormal variety $\mathcal{N}_{X}$ does not intersect the diagonal $\Delta_{q} \subset \mathbb{P}^{N} \times \mathbb{P}^{N}$ induced by the identification $\mathbb{P}^{N} \simeq \left( \mathbb{P}^{N} \right)^{\ast}$ via $q$.
Consequently, for a generic scalar product, $\ED_Q\left( X \right) = \vED\left( X \right)$.
\end{prop}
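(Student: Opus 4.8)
The plan is to realize the bad locus---scalar products $q$ for which $\mathcal{N}_X$ meets the diagonal $\Delta_q$---as the image of a proper morphism from a fixed variety, and then to exhibit a single $q$ outside it; since properness makes the image Zariski closed and nonemptiness of the complement is all we need, this suffices. Concretely, I would work inside the incidence variety
\[
\mathcal{I} = \left\{ \left( (x,H), q \right) \;\middle|\; (x,H) \in \mathcal{N}_X,\; q \text{ nondegenerate symmetric},\; H = q(x,-) \right\}
\]
sitting inside $\mathcal{N}_X \times \P\!\left( \Sym^2 (\C^{N+1})^\ast \right)$, where the last condition says that, under the identification $\P^N \simeq (\P^N)^\ast$ given by $q$, the point $x$ is sent to the hyperplane $H$, i.e. $(x,x) \in \Delta_q$ in the original notation. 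The fibre of $\mathcal{I}$ over a point $(x,H) \in \mathcal{N}_X$ is the space of symmetric forms sending the line $x$ to the hyperplane $H$; this is a \emph{linear} condition on $q$ (in fact $q(x,-)$ proportional to a fixed functional imposes $N$ conditions), so this fibre is a linear subspace of fixed dimension intersected with the nondegenerate locus. In particular, after projectivizing, $\mathcal{I} \to \mathcal{N}_X$ has irreducible fibres of constant dimension, so $\mathcal{I}$ is an irreducible (quasi-projective) variety, and the bad locus is exactly the image of $\mathcal{I}$ under the second projection $p_2$ to the space of scalar products.

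The two things to check are then: (i) $p_2(\mathcal{I})$ is not everything, and (ii) $p_2(\mathcal{I})$ is contained in a proper Zariski-closed subset, so that its complement contains a Zariski-open. For (ii), since $\mathcal{N}_X$ is projective, I would first take the closure $\bar{\mathcal{I}}$ inside $\mathcal{N}_X \times \P\!\left(\Sym^2(\C^{N+1})^\ast\right)$, which is projective, hence its projection to the (projectivized) space of symmetric forms is Zariski-closed; intersecting with the open set of nondegenerate forms recovers $p_2(\mathcal{I})$ as a locally closed set contained in a closed set that is proper as soon as we verify (i). For (i), it is enough to produce one nondegenerate $q$ with $\mathcal{N}_X \cap \Delta_q = \emptyset$; equivalently, a $q$ such that for no smooth point $x$ (or limit of conormal pairs) is $x$ itself $q$-orthogonal to $T_x X$. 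This is precisely the statement that a general point of $X$ is not a critical point of $\lVert \cdot \rVert_q^2$ for the "center at the origin" problem, or more robustly: the incidence condition "$x \in X$, $x \perp_q T_x X$" cuts out, for each fixed $q$, a subvariety of $\mathcal{N}_X$, and a parameter count in the total space $\mathcal{N}_X \times \{q\text{'s}\}$ shows the projection to the $q$-space cannot be dominant because the generic fibre over $\mathcal{N}_X$ has codimension $N$ in the $\binom{N+2}{2}$-dimensional space of forms while $\dim \mathcal{N}_X = N-1 < \binom{N+2}{2}$, so $\dim \mathcal{I} < \dim\{q\text{'s}\}$ and $p_2$ cannot be surjective. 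Hence the bad locus is a proper closed subset, its complement $U$ is the desired nonempty Zariski-open set, and for $q \in U$ Remark~\ref{rem:Diagonal} gives $\ED_Q(X) = \vED(X)$.

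The main obstacle is the bookkeeping at the boundary: $\mathcal{N}_X$ is defined as a \emph{closure}, so its points over $X_{\mathrm{sing}}$ are limits $(x,H)$ with $x$ possibly singular and $H$ a limiting tangent hyperplane, and one must make sure the condition "$H = q(x,-)$" is still the correct closed condition there and that taking $\bar{\mathcal{I}}$ inside the compact ambient space does not accidentally enlarge the bad locus beyond what $\Delta_q \cap \mathcal{N}_X$ controls. The clean way around this is never to analyze singular points directly: define $\mathcal{I}$ using all of $\mathcal{N}_X$ (not just the smooth part), observe the fibre description $\{q : q(x,-) \propto H\}$ makes sense verbatim for \emph{every} $(x,H) \in \mathcal{N}_X$ since it only uses the pair, conclude $\mathcal{I}$ is irreducible of dimension $\dim\mathcal{N}_X + (\text{fibre dim}) = (N-1) + \left(\binom{N+2}{2} - N\right) = \binom{N+2}{2} - 1 = \dim\P\!\left(\Sym^2(\C^{N+1})^\ast\right)$---so in fact the dimension count is \emph{tight} and one must instead argue that $p_2$ is not dominant by a transversality/genericity input rather than pure dimension. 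Here I would invoke exactly the sufficient condition quoted in Remark~\ref{rem:Diagonal}: choose $q$ so that $Q$ meets $X$ transversally and avoids $X_{\mathrm{sing}}$ (possible by a standard Bertini argument, since $X$ is real and hence not contained in any complex quadric, cf. Remark~\ref{rem:ED}\ref{item:isotropicQuadric}), which forces $\mathcal{N}_X \cap \Delta_q = \emptyset$ and exhibits a point outside $p_2(\mathcal{I})$; then $p_2(\mathcal{I})$, being a constructible proper subset of an irreducible variety of the same dimension, is not dense, and its closure is the complement of the sought-after $U$. The second sentence of the Proposition is then immediate from Proposition~\ref{prop:VirtualEDInequality} together with the equality case of Remark~\ref{rem:Diagonal}.
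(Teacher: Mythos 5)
Your setup is exactly the paper's: the same incidence variety, the same linear description of the fibres of the first projection, and the same use of projectivity to get closedness of the bad locus. The problem is that in your final paragraph you talk yourself out of the correct argument. Your dimension count mixes affine and projective conventions: the fibre dimension $\binom{N+2}{2}-N$ is the \emph{vector-space} dimension of $\{q : qx\in\langle y\rangle\}$, so the total $\binom{N+2}{2}-1$ must be compared with the affine dimension $\binom{N+2}{2}$ of the space of forms, not with $\dim\mathbb{P}\left(\Sym^2(\C^{N+1})^\ast\right)=\binom{N+2}{2}-1$. Done consistently (say projectively, as in the paper), the fibre is $\mathbb{P}(\ker\psi)$ of dimension $\dim\mathcal{S}-N$ and $\dim\mathcal{I}=(N-1)+\dim\mathcal{S}-N=\dim\mathcal{S}-1<\dim\mathcal{S}$. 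The count is \emph{not} tight, the second projection cannot be dominant, and no transversality input is needed; this is precisely how the paper concludes.

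The replacement argument you substitute is genuinely broken. You propose to exhibit a single good $q$ by choosing, via Bertini, an isotropic quadric $Q$ transverse to $X$ and disjoint from $X_{\mathrm{sing}}$. But $Q$ is a hypersurface in $\mathbb{P}^N$, so it meets every closed subvariety of positive dimension; whenever $\dim X_{\mathrm{sing}}\ge 1$ (which is the typical situation for the singular varieties this proposition is meant to cover, e.g.\ the determinantal varieties in Section 4), no quadric whatsoever satisfies your condition, and the argument produces nothing. This is exactly why Remark~\ref{rem:Diagonal} offers the weaker condition $\mathcal{N}_X\cap\Delta_q=\emptyset$ ``more generally'': it is the only one achievable for a generic $q$ in the singular case, and it is established by the dimension count you abandoned. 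If you restore that count (with consistent conventions) and delete the Bertini detour, your proof coincides with the paper's.
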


\begin{proof}
We prove it
Let $V = \mathbb{C}^{N+1}$ and let $\mathcal{S} = \mathbb{P}\left( \mathrm{Sym}^{2} V^{\ast} \right)$ be the space of symmetric forms defined up to scaling. We consider the incidence variety $\mathcal{Z} \subset \mathcal{N}_{X} \times \mathcal{S}$ defined as:
\[
\mathcal{Z} = \left\{ \left( \xi, \left[ q \right] \right) \in \mathcal{N}_{X} \times \mathcal{S} \mid \xi \in \Delta_{q} \right\},
\]
where $\xi = \left( \left[ x \right], \left[ y \right] \right) \in \mathcal{N}_{X} \subset \mathbb{P}\left( V \right) \times \mathbb{P}\left( V^{\ast} \right)$. It comes with the two projections
\[
\xymatrix{
& \mathcal{Z} \ar[dl]_{\pi_1} \ar[dr]^{\pi_2} \\
\mathcal{N}_{X} & & \mathcal S
}
\]
and we proceed by dimension count: first we use $\pi_1$ to determine $\dim\mathcal Z$ that will be less than $\dim\mathcal S$, hence its image cannot cover all the symmetric forms.

By definition of projective space, representative vectors $x \in V$ and $y \in V^{\ast}$ are non-zero. The condition $\xi \in \Delta_{q}$ means that the polarity induced by $q$ maps $\left[ x \right]$ to $\left[ y \right]$, which in coordinates is equivalent to $qx \in \langle y \rangle$.
Let us consider the first projection $\pi_{1}: \mathcal{Z} \to \mathcal{N}_{X}$. The fiber over a fixed $\xi$ is the linear subspace of forms $q$ satisfying $qx \in \langle y \rangle$.
Consider the evaluation map $\phi_{x}: \mathrm{Sym}^{2} V^{\ast} \to V^{\ast}$ defined by $q \mapsto qx$. Since $x \neq 0$, this map is surjective. Indeed, given $\ell \in V^{\ast}$, we choose a basis where $x = e_{0}$. Define $q$ such that its first row (and column) corresponds to the components of $\ell$ and zero elsewhere yields $qx = \ell$.
The condition $qx \in \langle y \rangle$ is equivalent to saying that the image of $q$ under $\phi_{x}$ lies in the $1$-dimensional subspace spanned by $y$. Consider the quotient map $\pi: V^{\ast} \to V^{\ast}/\langle y \rangle$. Since $y \neq 0$, this is a projection onto a space of dimension $N$. The composition $\psi = \pi \circ \phi_{x}: \mathrm{Sym}^{2} V^{\ast} \to V^{\ast}/\langle y \rangle$ is surjective because both $\phi_{x}$ and $\pi$ are surjective.
The fiber $\pi_{1}^{-1}\left( \xi \right)$ corresponds to $\mathbb{P}\left( \ker \psi \right)$. Since $\psi$ is surjective onto an $N$-dimensional space, the kernel has codimension $N$ in $\mathrm{Sym}^{2} V^{\ast}$. Thus, for every $\xi \in \mathcal{N}_{X}$, the fiber in $\mathcal{S}$ imposes $N$ independent linear conditions.
We compute the dimension of the total space:
\[
\dim \mathcal{Z} = \dim \mathcal{N}_{X} + \dim \pi_{1}^{-1}\left( \xi \right) = \left( N-1 \right) + \left( \dim \mathcal{S} - N \right) = \dim \mathcal{S} - 1.
\]
Consider now the second projection $\pi_{2}: \mathcal{Z} \to \mathcal{S}$. Its image is the locus of forms for which the intersection between $\mathcal N _X$ and the induced diagonal is non-empty. Since $\dim \mathcal{Z} < \dim \mathcal{S}$, the closure $\mathcal{B} = \overline{\pi_{2}\left( \mathcal{Z} \right)}$ is a proper subvariety of $\mathcal{S}$.
Let $D \subset \mathcal{S}$ be the hypersurface of degenerate forms ($\det q = 0$), then the set $U = \mathcal{S} \setminus \left( \mathcal{B} \cup D \right)$ is non-empty and dense and any $\left[ q \right] \in U$, $q$ is nondegenerate with $\mathcal{N}_{X} \cap \Delta_{q} = \emptyset$. In view of Remark \ref{rem:Diagonal} we have $\ED_Q\left( X \right) = \vED\left( X \right)$.
\end{proof}

\section{Stable polynomiality of $\vED$ in the case of $m=1$ and $r=k=2$}\label{sec:Ourcase}

In this section we analyze the function $n \mapsto \vED\left(1,n,2,2\right)$.
The main idea is to exploit the Kempf resolution $\pi: Z \to \mathcal M$
introduced in Section \ref{section:resolution}. This resolution can be realized
as an incidence variety
\[
\xymatrix{
& Z \ar[dl]_p \ar[dr]^\pi \\
\Gr_2 \left(\C^n\right) & & \mathcal M
}
\]
whose properties, like the fact that it coincides with the
Nash blow-up of $\mathcal M$, allow us to reformulate the computation of the
virtual Euclidean Distance degree as an intersection-theoretic problem on $Z$.
Pushing this problem forward to the Grassmannian $\Gr_2 \left(\C^n\right)$, we can then apply
equivariant localization techniques to establish the stable polynomiality of
$\vED\left(1,n,2,2\right)$.

\subsection{Polar degrees, Chern–Mather classes and Nash blow-up of $\mathcal M$}\label{sec:polardegrees}

As seen in Section \ref{sec:vED} the real projective invariant is the $\vED$ of the neurovariety. We recall here the main points in \cite[Section 1]{Piene79} in order to explicit the virtual $\ED$ degree in terms of intersection theory on the variety.
It follows from its definition that the conormal variety $\mathcal N _X$ of $X$ is naturally embedded in $\P^N \times\left(\P^N\right)^\ast$. Following \cite{Piene79} we can see that\footnote{He uses the notation $\mu _k$ for our $\delta_k$}
\begin{equation}\label{eq:delta_k}
\delta_k \left( X\right)=\deg\left( \left[ \mathcal N_X \right]\cdot \left( h^\vee\right)^{k+1}\cdot h^{m-k}\right)
\end{equation}
where $h,h^\vee$ denote the pullbacks of the hyperplane classes from $\P^N$ and $\left(\P^N\right)^\ast$. We call $M_k\left( X\right)=\left[ \mathcal N_X \right]\cdot \left( h^\vee\right)^{k+1}\cdot h^{m-k}$ the $k$-th polar classes of $X$.

\begin{rem}
    If $X$ is smooth they coincide with the degrees of the classical polar varieties defined via generic linear projections.
\end{rem}

Denote with $p_X:\mathrm{Nash}\left(X\right)\to X$ the Nash blow-up of $X$, namely the closed graph of the Gauss map and let $\nu:\tilde T\to \mathrm{Nash}\left(X\right)$ be the Nash bundle, that is the pullback of the tautological subbundle of $\mathrm{Nash}\left(X\right)$. We call the class 
\[
c_M \left(X\right)=p_\ast \left( c\left(\tilde T\right)\cap\left[\mathrm{Nash}\left(X\right)\right]\right)\in A_\bullet\left(X\right)
\]
the Chern-Mather class of $X$ and denote with $c_{i}^\mathrm{Ma}\left(X\right)\in A_{i} \left(X\right)$ its $i$-dimensional component. This is intrinsic of $X$, that is independent on the choice of the embedding (see \cite[Example 4.2.6]{Fulton98}).
\begin{thm}\label{thm:formulaAluffi}
    For a reduced, irreducible and non degenerated $m$-dimensional variety $X\su\P^N$ we have
    \[\vED \left(X\right)=\sum_{j=0}^{m}(-1)^{m+j} \left( 2^{j+1} -1\right)\deg\left(c_j ^\mathrm{Ma} \left(X\right)\cdot H^j\right)\]
    where $H$ is the hyperplane class in $\P^N$.
\end{thm}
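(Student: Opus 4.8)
The plan is to express $\vED(X)$ in terms of the polar degrees via Proposition~\ref{prop:VirtualEDInequality} and formula \eqref{eq:delta_k}, then to rewrite each polar degree $\delta_k(X)$ as an intersection number on the Nash blow-up using the conormal description, and finally to collect terms and recognize the Chern--Mather class. The starting point is the identity
\[
\vED(X) = \sum_{k=0}^{m} \delta_k(X) = \sum_{k=0}^{m} \deg\bigl( [\mathcal N_X] \cdot (h^\vee)^{k+1} \cdot h^{m-k} \bigr),
\]
so everything reduces to understanding the class $[\mathcal N_X]$ in $A^\bullet(\P^N \times (\P^N)^\ast)$. Following Piene, the conormal variety $\mathcal N_X$ is the projectivization of a bundle over $\mathrm{Nash}(X)$: concretely, $\mathcal N_X \simeq \P(\mathcal Q^\vee)$ where $\mathcal Q = (\tilde T)^\vee$ is the quotient by the Nash subbundle $\tilde T$, sitting inside $\mathrm{Nash}(X) \times (\P^N)^\ast$, and the projection to $\P^N$ factors through $p_X$.

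The key computation is to push the monomial $(h^\vee)^{k+1} h^{m-k} \cap [\mathcal N_X]$ forward to a point. First push down along the projective bundle $\mathcal N_X \to \mathrm{Nash}(X)$: the Segre class of $\mathcal Q^\vee$ intervenes, and by the Whitney formula $s(\mathcal Q^\vee) = c(\tilde T)/c(\text{(pullback of }T\P^N\text{ restricted)})$ up to the usual bookkeeping, one extracts a polynomial in the Chern classes of $\tilde T$ and in $H = $ the pulled-back hyperplane class. Summing over $k$ from $0$ to $m$ with the weights $(h^\vee)^{k+1}$ produces a geometric-series-type generating function; the coefficient $(2^{j+1}-1)$ and the sign $(-1)^{m+j}$ are exactly what comes out of $\sum_{k} (1+t)^{\bullet}$-type manipulations where the $2^{j+1}$ arises from evaluating a Segre series at the hyperplane class and the $-1$ from the "$+1$" shift in the exponent $(h^\vee)^{k+1}$. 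After pushing forward by $p_X$ one replaces $c(\tilde T) \cap [\mathrm{Nash}(X)]$ by $c_M(X)$, landing on $\sum_j (-1)^{m+j}(2^{j+1}-1)\deg(c_j^{\mathrm{Ma}}(X)\cdot H^j)$.

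The main obstacle is the bookkeeping in the projective-bundle pushforward: one must be careful about which Segre/Chern convention is in force (Fulton's $s(E)$ vs.\ the "total Segre class of the dual"), about the dimension shift $N - m - 1$ for the fibre of $\mathcal N_X \to X$, and about correctly identifying $h^\vee$ restricted to $\mathcal N_X$ with the relative $\mathcal O(1)$ of the projective bundle $\P(\mathcal Q^\vee)$. A clean way to organize this is to introduce the generating function $\sum_{k\ge 0} (h^\vee)^{k+1} h^{m-k}$ formally, substitute the defining relation of the projective bundle $\sum_i (h^\vee)^{\mathrm{rk}-i} p^\ast c_i(\mathcal Q^\vee) = 0$ to reduce powers of $h^\vee$, and only then push forward; the weights $2^{j+1}-1$ emerge as $[\text{coefficient extraction}]$ of $(1 - 2H)^{-1} - (1-H)^{-1}$ or an equivalent rational function. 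I would verify the final formula against the smooth case (where $c_j^{\mathrm{Ma}} = c_j(TX)$ and the statement must reduce to the classical Catanese--Trifogli / DHOST formula $\vED(X) = \sum_j (-1)^j (2^{j+1}-1) \deg c_j(TX) \cdot H^j$ up to sign conventions) and against a low-dimensional example such as a smooth plane curve, to pin down all signs before committing to the write-up.
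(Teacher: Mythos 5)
The paper does not prove this statement at all: its ``proof'' is a one-line citation to \cite[Proposition 2.9]{Alu18}. What you propose is therefore necessarily a different route, namely an actual derivation, and your skeleton is the standard one behind Aluffi's result: write $\vED(X)=\sum_k\delta_k(X)$ with $\delta_k$ as in \eqref{eq:delta_k}, realize $\mathcal N_X$ as a projective bundle over $\mathrm{Nash}(X)$, push forward, and collect coefficients. Two points need tightening before this becomes a proof. First, the bundle whose projectivization gives $\mathcal N_X$ is not $\mathcal Q^\vee$ with $\mathcal Q=(\tilde T)^\vee$; it is the \emph{quotient} of (a twist of) the trivial bundle $\mathcal O^{N+1}$, equivalently of the restricted tangent bundle, by the Nash bundle $\tilde T$ --- the ``Nash conormal bundle'' --- and its Segre class is computed from $c(\tilde T)$ via Whitney exactly as you indicate. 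Second, and more importantly, the coefficient $(-1)^{m+j}(2^{j+1}-1)$ is asserted to ``emerge'' from a generating-function manipulation but never derived; this is the entire content of the statement. The clean way to close the gap is to invoke (or reprove) Piene's formula for the individual polar degrees,
\[
\delta_k(X)\;=\;\sum_{i=0}^{k}(-1)^{i}\binom{m-i+1}{k-i}\,\deg\bigl(c^{\mathrm{Ma}}_{m-i}(X)\cdot H^{\,m-i}\bigr),
\]
and then sum over $k=0,\dots,m$ using $\sum_{k\ge i}\binom{m-i+1}{k-i}=2^{m-i+1}-1$; setting $j=m-i$ gives precisely $(-1)^{m+j}(2^{j+1}-1)$. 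Your proposed sanity check against the smooth case (DHOST, Catanese--Trifogli) is the right one and does confirm the signs under the dimension-versus-codimension reindexing. So: correct strategy, genuinely more informative than the paper's citation, but the central binomial identity must be carried out rather than gestured at.
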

\begin{proof}
    This is \cite[Proposition 2.9]{Alu18}.
\end{proof}

We now specialize the previous discussion to the variety
\[
\mathcal M = \sigma_2 \left(v_2(\mathbb P^{n-1})\right)
\subset \mathbb P(\Sym^2 \mathbb C^n),
\]
namely the variety of symmetric matrices of rank at most $2$. 
Recall that $\mathcal M$ has the expected dimension $m=2n-2$. For $G=\Gr _2 (\mathbb C^n)$ denote by $\mathcal U$ the tautological rank-$2$
subbundle and $\mathcal Q$ the universal quotient bundle.

\begin{prop}\label{prop:ZisNashBlowUp}
The Kempf's resolution (\ref{eq:KempfResolution}) $\pi : Z \to \mathcal M$ is the Nash blow-up of $\mathcal M$.
\end{prop}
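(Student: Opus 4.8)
The plan is to verify directly that the Kempf resolution $\pi:Z\to\mathcal M$ realizes the graph of the Gauss map of $\mathcal M$, i.e. that the fibre of $\pi$ over a point $[Q]\in\mathcal M$ records exactly the limiting tangent spaces at $[Q]$. Concretely, I would proceed in three steps. First, recall from the tangent-space computation in \eqref{eq:TangentDeterminantal} that at a smooth point $[Q]\in\mathcal M$ — i.e. a form of rank exactly $2$ with column span $H=\mathrm{Im}(Q)\in G$ — the projective tangent space is
\[
\mathbb P\,T_{[Q]}\mathcal M=\mathbb P\left\{B\in\Sym^2 V^\ast\ \big|\ B(\ker Q)\subseteq H\right\}.
\]
This tangent space depends on $[Q]$ only through the plane $H=\mathrm{Im}(Q)$; in bundle terms, over the open locus $\mathcal M_{\mathrm{reg}}$ the Gauss map factors through the map $[Q]\mapsto H=\mathrm{Im}(Q)\in G$, which is precisely the restriction of the projection $p:Z\to G$ under the section $[Q]\mapsto([Q],\mathrm{Im}(Q))$ that identifies $\mathcal M_{\mathrm{reg}}$ with an open subset of $Z$. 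Thus $\pi$ restricted to $p^{-1}$ of this open locus agrees with the Gauss map, and since $Z$ is smooth and irreducible of dimension $\dim G+\dim\mathbb P(\Sym^2\mathcal U)=\binom{n}{2}\cdot 0+\ldots$ — more simply, of the same dimension $2n-2$ as $\mathcal M$ — and $\pi$ is birational, $Z$ is a resolution dominating $\mathrm{Nash}(\mathcal M)$.

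Second, I would upgrade this to an isomorphism $Z\cong\mathrm{Nash}(\mathcal M)$. By the universal property of the Nash blow-up, the factorization of the Gauss map over the smooth locus extends to a morphism $Z\to\mathrm{Nash}(\mathcal M)$ over $\mathcal M$ (one checks the tautological subbundle pulls back correctly: on $Z$ the natural rank-$(2n-2)$ candidate for the pullback of the Nash bundle is built functorially from $\Sym^2\mathcal U$ and $\mathcal U\otimes\mathcal Q$ via \eqref{eq:TangentDeterminantal}, and it agrees with $\tilde T$ on the dense open set, hence everywhere by separatedness). Conversely, to see the map $Z\to\mathrm{Nash}(\mathcal M)$ is an isomorphism it suffices to show $\mathrm{Nash}(\mathcal M)\to\mathcal M$ has fibres of the expected size; since $Z$ is smooth and $\mathcal M$ is normal with rational singularities (the Kempf resolution has $R^i\pi_\ast\mathcal O_Z=0$ for $i>0$, as recalled after \eqref{eq:KempfResolution}), and $\mathrm{Nash}(\mathcal M)$ sits between $Z$ and $\mathcal M$ as a birational modification dominated by $Z$, it is enough to check that $\pi:Z\to\mathcal M$ does not contract any curve into the singular locus $\mathcal M_{\mathrm{sing}}=\mathbb P(V_1)$ (rank-$\le 1$ forms) without distinguishing tangent directions — equivalently, that distinct points of $\pi^{-1}([Q_0])$ for $[Q_0]$ of rank $1$ carry distinct limiting tangent planes. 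A rank-$1$ form $Q_0=\ell^2$ has $\pi^{-1}([Q_0])=\{([Q_0],H)\mid \ell\in H^\ast\}\cong\mathbb P^{n-2}$, the planes $H$ containing the line $\langle\ell\rangle$; for each such $H$ the limiting tangent space is the $B$ with $B(\ker)\subseteq H$ read off along an approach within $\Sym^2 H$, and these are pairwise distinct as $H$ varies. This identifies the $\pi$-fibre with the fibre of the Gauss closure.

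Third, assemble: the morphism $Z\to\mathrm{Nash}(\mathcal M)$ is birational, proper, $\mathrm{Nash}(\mathcal M)\to\mathcal M$ factors it, and on each fibre it is a bijection respecting the tautological bundles, so by Zariski's main theorem (both sides being varieties, with $\mathrm{Nash}(\mathcal M)$ possibly non-normal a priori but $Z$ normal and the map finite birational) it is an isomorphism; hence $Z=\mathrm{Nash}(\mathcal M)$ with $\tilde T$ corresponding to the functorial subbundle of $\Sym^2 V^\ast\otimes\mathcal O_Z$ built from $\Sym^2\mathcal U\oplus(\mathcal U\otimes\mathcal Q)$. The main obstacle I anticipate is the second step: carefully identifying the limiting tangent spaces at the rank-$1$ locus and checking injectivity of $([Q_0],H)\mapsto(\text{limit of }T\mathcal M)$, since one must rule out the possibility that two different planes $H,H'\supseteq\langle\ell\rangle$ yield the same limiting tangent hyperplane-configuration — this is a local computation in the dual-numbers / arc-space picture near a worst singular point, analogous to but more delicate than the remark following \eqref{eq:TangentDeterminantal}. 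Everything else (dimension count, birationality, the universal property of the Nash blow-up, and the normality of $Z$) is formal given the material already recalled.
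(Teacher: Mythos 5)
Your first step coincides with the paper's proof: the embedded tangent space at a rank-two point $[Q]$ depends only on $H=\mathrm{Im}(Q)$, so the Gauss map factors through $G=\Gr_2(V)$ and the rank-two locus of $Z$ is identified with the graph of the Gauss map; since $Z$ is irreducible and this locus is dense, $Z$ dominates the closure of the graph. The paper stops essentially there. Where you diverge is in how you promote ``maps birationally onto'' to ``is isomorphic to'' $\mathrm{Nash}(\mathcal M)$, and this is where the genuine gap sits: in your third step you invoke Zariski's main theorem for a finite birational morphism whose \emph{target} you explicitly allow to be non-normal. That form of ZMT requires normality of the target, not of the source; a finite, birational, even bijective morphism from a normal variety onto a non-normal one need not be an isomorphism (the normalization of a cuspidal cubic is exactly such a map). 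So injectivity on points plus properness plus birationality, which is all that your steps two and three deliver, does not close the argument. Two smaller issues: the claim that any resolution automatically dominates the Nash blow-up is not justified (and not needed), and the rational-singularities input is irrelevant to this proposition.

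The fix is cheaper than what you attempt, and it also dissolves what you call ``the main obstacle.'' The assignment $H\mapsto T_H=\left\{B\in\Sym^2 V^\ast \mid B|_{\mathrm{Ann}(H)}=0\right\}$ is a morphism $G\to\Gr_m\left(\Sym^2 V^\ast\right)$, defined by the constant-rank subbundle of $\Sym^2 V^\ast\otimes\mathcal O_G$ whose fibre at $H$ is $T_H$, and it is a closed immersion: one recovers $\mathrm{Ann}(H)$ from $T_H$ as $\left\{v\in V \mid B(v,v)=0 \text{ for all } B\in T_H\right\}$, since for $v\notin\mathrm{Ann}(H)$ one may pick $\lambda\in H$ with $\lambda(v)\neq0$ and use $B=\lambda^2\in T_H$. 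Hence $([Q],H)\mapsto([Q],T_H)$ realizes $Z$ as a closed subvariety of $\mathcal M\times\Gr_m\left(\Sym^2 V^\ast\right)$ containing the graph of the Gauss map as a dense subset, so $Z$ \emph{is} the closure of the graph, i.e.\ the Nash blow-up. In particular no computation of limiting tangent spaces along the rank-one locus is needed: whichever pairs $\left([\ell^2],H\right)$ survive in the closure, distinct planes $H$ already give distinct points of $\Gr_m\left(\Sym^2 V^\ast\right)$ for the elementary reason above, not because of a delicate arc-space analysis.
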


\begin{proof}
Over the smooth locus $\mathcal M_{\mathrm{reg}}$, a point corresponds to a
quadratic form of rank exactly $2$, hence it determines uniquely its image
$U\subset \mathbb C^n$.
In particular, the Nash map
\[
\gamma_{\mathcal M} ^\mathrm{Nash}: \mathcal M_{\mathrm{reg}} \longrightarrow
\Gr _m (\Sym^2\mathbb C^n)
\]
factors through the Grassmannian $\Gr _2 (\mathbb C^n)$ by associating to a smooth point its support $2$-plane. The space $Z=\mathbb P(\Sym^2\mathcal U)$ parametrizes precisely the pairs
consisting of a point of $\mathcal M_{\mathrm{reg}}$ together with its
projective tangent space. Therefore $Z$ coincides with the closure of the graph of the Gauss map. By definition, this is the Nash blow-up of $\mathcal M$.
\end{proof}

We define the Nash bundle $\widetilde T \to Z$ as the pullback of the
tautological rank-$m$ subbundle on $\Gr_m(\Sym^2\mathbb C^n)$ via the
Nash map $\gamma_{\mathcal M}^{\mathrm{Nash}}$. Let $\xi = c_1(\mathcal O_Z(1))$ denote the first Chern class of the
tautological line bundle on $Z$. 

\begin{prop}\label{eq:ExactSequence}
   With setting and notations as above, there exists a canonical exact sequence
of vector bundles on $Z=\mathbb P\left(\Sym^2\mathcal U^\ast\right)$
\begin{equation}\label{eq:tag{4.2}}
0\longrightarrow \mathcal{O}_Z\left( -1\right)
\longrightarrow
p^\ast\left( \Sym^2\mathcal{U}^\ast\oplus\left( \mathcal{U}^\ast\otimes \mathcal{Q}^\ast\right)\right)
\longrightarrow
\widetilde T
\longrightarrow 0,
\end{equation}
in particular
\begin{equation}\label{eq:tag{4.3}}
c\left(\widetilde T\right) = \frac{ c \left(p^\ast\!\left(
\Sym^2\mathcal U^\ast \oplus (\mathcal U^\ast \otimes \mathcal Q^\ast)
\right)\right)}{c(\mathcal O_Z(-1))}
=
\frac{c\left(p^\ast\!\left(
\Sym^2\mathcal U^\ast \oplus (\mathcal U^\ast \otimes \mathcal Q^\ast)
\right)\right)}{1-\xi}.
\end{equation}
\end{prop}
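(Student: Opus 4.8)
The plan is to identify the Nash bundle $\widetilde T$ directly from the geometry of the Kempf resolution $Z = \mathbb P(\Sym^2 \mathcal U)$ over $G = \Gr_2(\mathbb C^n)$. The key observation is the tangent space description \eqref{eq:TangentDeterminantal}: for a smooth point $[Q] \in \mathcal M_{\mathrm{reg}}$ with support plane $H = \mathrm{Im}(Q) \in G$, the affine tangent space to the cone is $\{B \in \Sym^2\mathbb C^{n,\ast} \mid B(\ker Q) \subseteq \mathrm{Im}(Q)\}$. Decomposing $\mathbb C^n = H \oplus R$ (dually $\mathbb C^{n,\ast} = \mathcal U^\ast \oplus \mathcal Q^\ast$ relative to the bundles on $G$), this says $B_{22} = 0$, i.e. the affine tangent cone is the fibre of $p^\ast(\Sym^2\mathcal U^\ast \oplus (\mathcal U^\ast \otimes \mathcal Q^\ast))$. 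Projectivizing, the projective tangent space $T_{[Q]}\mathcal M$ corresponds to the quotient of this bundle by the line $\mathcal O_Z(-1)$ spanned by $Q$ itself. Since $Z$ is the Nash blow-up (Proposition \ref{prop:ZisNashBlowUp}) and $\widetilde T$ is by definition the pullback of the tautological rank-$m$ subbundle on $\Gr_m(\Sym^2\mathbb C^n)$ under the Gauss map, we obtain \eqref{eq:tag{4.2}} over $\mathcal M_{\mathrm{reg}}$.

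First I would set up the notation carefully, fixing the identification $\mathcal M = \mathbb P\{Q \in \Sym^2 V^\ast \mid \mathrm{rk}\, Q \le 2\}$ and recalling that on $G$ the tautological sequence $0 \to \mathcal U \to V \otimes \mathcal O_G \to \mathcal Q \to 0$ dualizes; relative to a point $H \in G$ one has $\Sym^2 V^\ast = \Sym^2\mathcal U^\ast \oplus (\mathcal U^\ast \otimes \mathcal Q^\ast) \oplus \Sym^2 \mathcal Q^\ast$. Next I would note that over $p^{-1}(H) = \mathbb P(\Sym^2\mathcal U^\ast|_H)$ the tautological line $\mathcal O_Z(-1)$ has fibre $\langle Q \rangle \subset \Sym^2\mathcal U^\ast|_H$; this line is contained in the subbundle $\Sym^2\mathcal U^\ast \oplus (\mathcal U^\ast\otimes\mathcal Q^\ast)$ (indeed in its first summand), which gives the injection $\mathcal O_Z(-1) \hookrightarrow p^\ast(\Sym^2\mathcal U^\ast \oplus (\mathcal U^\ast\otimes\mathcal Q^\ast))$. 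Then I would verify that the cokernel of this injection has rank $\binom{3}{2} + 2(n-2) - 1 = 2n-2 = m$ and that, fibrewise over $\mathcal M_{\mathrm{reg}}$, it is exactly the projective tangent space $T_{[Q]}\mathcal M$ via \eqref{eq:TangentDeterminantal}; this identifies the cokernel with $\gamma_{\mathcal M}^{\mathrm{Nash},\ast}(\text{tautological subbundle})= \widetilde T$ over the dense open locus. Since $Z$ is smooth and irreducible and both sides are vector bundles agreeing on a dense open set with compatible maps, the exact sequence extends over all of $Z$. Finally, \eqref{eq:tag{4.3}} follows by the Whitney formula and $c(\mathcal O_Z(-1)) = 1 - \xi$.

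The main obstacle is the extension from $\mathcal M_{\mathrm{reg}}$ to all of $Z$: over the locus where $\mathrm{rk}\, Q = 1$ (the secant variety degenerates to the Veronese itself), the naive tangent space jumps, so one cannot argue fibrewise there. The clean way around this is to work entirely on $Z$ from the start: the map $\mathcal O_Z(-1) \to p^\ast(\Sym^2\mathcal U^\ast \oplus (\mathcal U^\ast\otimes\mathcal Q^\ast))$ is a bundle injection everywhere on $Z$ (it is nowhere zero, being the inclusion of a tautological line into a subbundle containing it), so its cokernel $\mathcal C$ is automatically a vector bundle of rank $m$ on all of $Z$. It then suffices to check that $\mathcal C$ agrees with $\widetilde T$ on the dense open $\pi^{-1}(\mathcal M_{\mathrm{reg}})$ — both being subbundles of $p^\ast(\Sym^2 V^\ast)\big/\mathcal O_Z(-1)$ pulled back appropriately — and invoke that a morphism of vector bundles on an integral scheme agreeing on a dense open set agrees everywhere. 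I would also double-check the rank count and the claim that $\langle Q\rangle$ lands in the first summand $\Sym^2\mathcal U^\ast$ rather than merely in the subbundle; this is immediate since $Q \in \Sym^2\mathcal U^\ast|_H$ by the very definition \eqref{eq:KempfResolution} of $Z$.
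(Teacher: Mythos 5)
Your proposal is correct and follows essentially the same route as the paper: identify the affine tangent space of the cone via \eqref{eq:TangentDeterminantal} with the fibre of $p^\ast\left(\Sym^2\mathcal U^\ast\oplus(\mathcal U^\ast\otimes\mathcal Q^\ast)\right)$, recognize $\mathcal O_Z(-1)$ as the line spanned by the point itself, and pass to the quotient to recover $\widetilde T$ via Proposition \ref{prop:ZisNashBlowUp}. Your explicit argument for extending the identification from $\pi^{-1}(\mathcal M_{\mathrm{reg}})$ to all of $Z$ (the cokernel is a bundle everywhere since the injection is nowhere zero, and two bundle maps agreeing on a dense open subset of an integral scheme agree everywhere) is a welcome clarification of a step the paper treats more implicitly.
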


\begin{proof}
We work over the dual numbers $\C_\varepsilon = \C\left[ \varepsilon\right] / \left(\varepsilon^2\right)$. Let $G=\Gr_2\left( V \right)$ with tautological sequence
\[
0\to \mathcal{U} \to V\otimes \mathcal{O}_G \to \mathcal{Q} \to 0,
\]
and set $p:Z=\P\left( \Sym^2\mathcal{U}^\ast  \right)\rightarrow G$. A point of $Z$ is a pair $\left( U,\left[ A\right]\right)$ with $U\in G$ and
$A\in\Sym^2U^\ast\setminus\left\{ 0\right\}$.
By definition of the projective bundle, $Z$ carries the tautological exact sequence
\begin{equation}\label{eq:tautologicalZ}
0\longrightarrow \mathcal{O}_Z\left( -1\right)
\longrightarrow p^\ast\left( \Sym^2\mathcal{U}^\ast\right)
\longrightarrow \mathcal{Q}_{\mathrm{rel}}
\longrightarrow 0,
\end{equation}
and its fiber at $\left( U,\left[ A\right]\right)$ is $\mathcal{O}_Z\left( -1\right)\big|_{\left( U,\left[ A\right]\right)}
=
\left\langle A\right\rangle
\subset \Sym^2U^\ast$.
Let $C \left( \mathcal{M} \right)\subset \Sym^2V^\ast$ be the affine cone over
$\mathcal{M}$ and $A\in C \left( \mathcal{M} \right)$ with $\mathrm{rk}\left( A\right)=2$. In view of \eqref{eq:TangentDeterminantal} if we set $U=\mathrm{Im}\left( A \right)$ then there is a canonical identification
\begin{equation}\label{eq:tangentSplitting}
T_AC \left( \mathcal{M} \right)\simeq \Sym^2U \oplus \left( U\otimes \left(\frac{V}{U}\right)^\ast\right).
\end{equation}
When passing to the projective variety, tangent directions at a point $\left[A\right]\in \mathcal M\subset \mathbb P\left(\Sym^2V^\ast\right)$ are described in terms of
first-order deformations over the dual numbers $\C_\varepsilon$. By definition, a $\C_\varepsilon$--point of $\mathbb P\left(\Sym^2V^\ast\right)$
corresponds to a rank-one direct summand of $\Sym^2V^\ast\otimes \C_\varepsilon$ whose reduction modulo $\left(\varepsilon\right)$
is $\langle A\rangle$.
Accordingly, two first-order deformations define the same tangent direction at $\left[A\right]$ if and only if they determine the same rank-one
$\C_\varepsilon$-submodule of $\Sym^2V^\ast\otimes \C_\varepsilon$. After fixing a representative whose reduction modulo $\left(\varepsilon\right)$ is
$A$, this equivalence identifies deformations whose infinitesimal parts differ by a multiple of $A$.
In other words, the tangent direction only depends on the class of $B$ in $\Sym^2V^\ast/\langle A\rangle$.
Therefore, the embedded projective tangent space at $\left[A\right]$ is canonically identified with 
\begin{equation}\label{eq:projectiveTangentDualNumbers}
T_{\left[A\right]}\mathcal M \simeq \frac{T_A C\left(\mathcal M\right)}{\langle A\rangle}.
\end{equation}
Varying $\bigl(U,[A]\bigr)$ over $Z$, the description
\eqref{eq:tangentSplitting} globalizes to the vector bundle
\[
p^\ast\!\left(
\Sym^2\mathcal U^\ast \oplus (\mathcal U^\ast \otimes \mathcal Q^\ast)
\right),
\]
whose fiber at $\bigl(U,[A]\bigr)$ identifies with the Zariski tangent space
$T_A C\left(\mathcal M\right)$ of the affine cone.
The line $\langle A\rangle \subset \Sym^2 U^\ast$ globalizes to the
tautological subbundle
$\mathcal O_Z(-1)\subset p^\ast(\Sym^2\mathcal U^\ast)$
via \eqref{eq:tautologicalZ}.
Passing to the projective tangent space as in
\eqref{eq:projectiveTangentDualNumbers}, we obtain a rank--$m$ vector bundle
on $Z$ whose fiber at a smooth point $\bigl([A],U\bigr)\in Z$ identifies with
the embedded projective tangent space $T_{[A]}\mathcal M$.
In view of Proposition \ref{prop:ZisNashBlowUp} this construction realizes $Z$ as the closure of the graph of the Gauss map of $\mathcal M$, and the above quotient bundle is canonically isomorphic, via the Nash map, to the
pullback of the tautological bundle on $\Gr_m\left(\Sym^2 V^\ast\right)$.
By definition, this bundle is the Nash bundle $\widetilde T$, and we obtain the exact sequence
\[
0\longrightarrow \mathcal{O}_Z\left(-1\right)
\longrightarrow
p^\ast\left(
\Sym^2\mathcal U^\ast \oplus \left(\mathcal U^\ast \otimes \mathcal Q^\ast\right)
\right)
\longrightarrow
\widetilde T
\longrightarrow 0.
\]
\end{proof}


\subsection{Reduction to integrals on $\Gr_2 \left(\C^n\right)$}\label{sec:reductionGr}

We now explain how the computation of polar degrees of $\mathcal M$
can be reduced to intersection-theoretic calculations on the Grassmannian
$G=\Gr _2 (\mathbb C^n)$. Recall that
\[
c_M\left(\mathcal M\right) = \pi_\ast \left( c\left(\widetilde T\right)\cap[Z] \right),
\]
where, in view of Proposition \ref{prop:ZisNashBlowUp} $Z=\mathbb P\left(\Sym^2\mathcal U^\ast\right)$ is the Nash blow-up of $\mathcal M$
and $\widetilde T$ is the Nash bundle. Let $H$ denote the hyperplane class of $\mathcal M\subset\mathbb P(\Sym^2\mathbb C^n)$, then we have $\pi^\ast H=\xi$, where $\xi=c_1(\mathcal O_Z(1))$.
By the projection formula, for every $i,j$ we obtain
\begin{equation}\label{eq:tag{4.4}}
\deg \left(
c^{\mathrm{Ma}}_{m-i}\left(\mathcal M\right)\cdot H^j
\right)
=
\int_{\mathcal M}
c^{\mathrm{Ma}}_{m-i}\left(\mathcal M\right)\cdot H^j
=
\int_Z
c_i\left(\widetilde T\right)\,\xi^j.
\end{equation}
Therefore, each term appearing in Theorem \ref{thm:formulaAluffi} for the polar degrees can be expressed as an integral over $Z$ of the form
\[
\int_Z c_i\left(\widetilde T\right)\,\xi^j.
\]
Using \ref{eq:tag{4.3}}, the class $c_i\left(\widetilde T\right)$ can be written as a polynomial
in $\xi$ with coefficients in the Chow ring of $G$ involving only
the Chern classes of $\mathcal U^\ast$ and $\mathcal Q^\ast$, hence all integrals reduce to expressions of the form
\[
\int_Z \xi^k \cdot p^\ast (\alpha),
\qquad
\alpha\in A^\ast (G).
\]
To evaluate such integrals, we use the projective bundle formula for
$p:Z=\mathbb P(\Sym^2\mathcal U^\ast)\to G$.
Since $\rk(\Sym^2\mathcal U^\ast)=3$, we have
\[
p_*(\xi^{2+t}) = s_t(\Sym^2\mathcal U^\ast),
\qquad t\ge 0,
\]
where $s_t$ denotes the $t$-th Segre class.
Consequently,
\begin{equation}\label{eq:tag{4.5}}
\int_Z \xi^{2+t}\cdot p^\ast (\alpha)
=
\int_G s_t(\Sym^2\mathcal U^\ast)\cdot \alpha.
\end{equation}

Combining (\ref{eq:tag{4.4}}) and (\ref{eq:tag{4.5}}), we conclude that all polar degrees of $\mathcal M$ are obtained by integrating polynomials in the Chern classes of $\mathcal U^\ast$ and $\mathcal Q^\ast$ over the Grassmannian $G$.

\subsection{Equivariant localization on $\Gr_2 \left(\C^n\right)$ and polynomiality}\label{sec:localization}

We now explain how to evaluate the intersection numbers appearing in the
previous section.
Since all integrals are taken over the Grassmannian
$G=\Gr_2(\mathbb C^n)$, we use equivariant localization with respect to the
standard action of the algebraic torus. We refer to \cite{EG982} for details, or \cite{EG98} for a general introduction.

Let $T=(\mathbb C^\ast)^n$ act diagonally on $\mathbb C^n$ with characters
$t_1,\dots,t_n$.
This induces a natural action on $G$, as well as on the tautological
bundles $\mathcal U$ and $\mathcal Q$.
By functoriality, the classes appearing in~\eqref{eq:tag{4.5}} admit canonical equivariant
lifts to the equivariant Chow ring $A_T^\ast(G)$ \cite[Section 2.4]{EG98}. 
We have
\begin{thm}
    Let $F_T$ be the fraction field of the equivariant Chow ring $R_T$, then for every $\alpha \in A_T^\ast(G) \otimes_{\mathbb{Q}} F_T$, one has
\begin{equation}\label{eq:tag{4.6}}
\int_{G} \alpha = \sum_{p \in G^T}
\frac{\alpha|_p}{c_T(T_p G)},
\end{equation}
where $c_T(T_p G)$ is the equivariant top Chern class of the tangent representation at $p$.
\end{thm}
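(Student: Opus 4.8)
The plan is to derive~\eqref{eq:tag{4.6}} from the Edidin--Graham localization theorem in equivariant Chow theory, together with the equivariant self-intersection formula; the only input specific to our situation is the explicit description of the fixed locus $G^T$.

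First I would record the structure of the fixed locus. Under the standard diagonal action, $T=(\mathbb C^\ast)^n$ decomposes $\mathbb C^n$ into the one-dimensional weight spaces $\mathbb C e_1,\dots,\mathbb C e_n$ attached to the pairwise distinct characters $t_1,\dots,t_n$, so a $2$-plane $H\subset\mathbb C^n$ is $T$-invariant if and only if $H=\langle e_i,e_j\rangle$ for some $i<j$. Hence $G^T$ is the finite reduced set of the $\binom n2$ isolated points $p_{ij}$. At such a point one has $\mathcal U|_{p_{ij}}=\langle e_i,e_j\rangle$ and $T_{p_{ij}}G=\Hom(\mathcal U,\mathcal Q)|_{p_{ij}}$, whose $T$-weights are the nonzero linear forms $t_k-t_i$ and $t_k-t_j$ for $k\notin\{i,j\}$. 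Consequently the equivariant Euler class
\[
c_T\bigl(T_{p_{ij}}G\bigr)=\prod_{k\neq i,j}(t_k-t_i)(t_k-t_j)
\]
is a nonzero element of $R_T=A_T^\ast(\mathrm{pt})$, hence invertible in $F_T$, so every denominator occurring in~\eqref{eq:tag{4.6}} makes sense.

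Next I would invoke the localization theorem: since $G$ is smooth and projective and $G^T$ is the finite reduced set above, the proper pushforward along $\iota:G^T\hookrightarrow G$ induces an isomorphism $\bigoplus_{p\in G^T}F_T\xrightarrow{\ \sim\ }A_T^\ast(G)\otimes_{\mathbb Q}F_T$ \cite{EG982} (here the $p$-th summand is the group attached to the reduced point $\{p\}$ after passing to $F_T$-coefficients). Writing $\iota_p:\{p\}\hookrightarrow G$ for the inclusions, this means any $\alpha\in A_T^\ast(G)\otimes_{\mathbb Q}F_T$ can be written uniquely as $\alpha=\sum_{p\in G^T}\iota_{p\ast}(\beta_p)$ with $\beta_p\in F_T$. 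To identify $\beta_p$ I would restrict to a fixed point $q$: for $p\neq q$ the points $\{p\}$ and $\{q\}$ are disjoint closed subvarieties of the smooth variety $G$, whence $\iota_q^\ast\iota_{p\ast}=0$, while the equivariant self-intersection formula gives $\iota_p^\ast\iota_{p\ast}(\beta_p)=c_T(N_{p/G})\cdot\beta_p=c_T(T_pG)\cdot\beta_p$, using $N_{p/G}=T_pG$. Therefore $\alpha|_q=c_T(T_qG)\,\beta_q$, and inverting $c_T(T_qG)$ in $F_T$ yields $\beta_q=\alpha|_q/c_T(T_qG)$.

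Finally I would push forward to a point. The symbol $\int_G$ denotes the equivariant proper pushforward along the structure morphism $\pi:G\to\mathrm{pt}$, which is defined because $G$ is complete, and $\pi\circ\iota_p=\mathrm{id}_{\mathrm{pt}}$ for every $p\in G^T$; hence
\[
\int_G\alpha=\pi_\ast\Bigl(\sum_{p\in G^T}\iota_{p\ast}\beta_p\Bigr)=\sum_{p\in G^T}(\pi\circ\iota_p)_\ast\beta_p=\sum_{p\in G^T}\beta_p=\sum_{p\in G^T}\frac{\alpha|_p}{c_T(T_pG)},
\]
which is exactly~\eqref{eq:tag{4.6}}. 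The main obstacle is the localization isomorphism itself: once it is available the rest is formal, but it rests on the structure theory of equivariant Chow groups via finite-dimensional approximation and is the one genuinely nontrivial ingredient. A secondary point to treat with care is that, unlike the non-equivariant degree, $\int_G$ takes values in $R_T$ and shifts degrees, so~\eqref{eq:tag{4.6}} is to be read in $F_T$ (and applied degree by degree when $\alpha$ is homogeneous).
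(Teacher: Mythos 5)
Your argument is correct. The paper itself offers no proof beyond the citation ``\cite[Section 4, Theorem 2 and Corollary 1]{EG982}'', and what you have written is precisely the standard Edidin--Graham derivation behind that citation: the localization isomorphism $\bigoplus_{p\in G^T}F_T\xrightarrow{\sim}A_T^\ast(G)\otimes_{\mathbb Q}F_T$, the identification of the coefficients $\beta_p$ via the equivariant self-intersection formula, and the pushforward to a point. You also correctly verify the hypotheses in this specific case (isolated reduced fixed points, invertibility of $c_T(T_pG)$ in $F_T$), so nothing is missing.
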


\begin{proof}
  This is \cite[Section 4, Theorem 2 and Corollary 1]{EG982}.
\end{proof}

\begin{rem}\label{rem:identificationEulerClassTangent}
    We point out that the sum in \ref{eq:tag{4.6}} is over the $T$-fixed points of $G$ hence it is finite and $e_T(T_pG)$ denotes the equivariant Euler class of the tangent space at $p$. Note that since $p$ is an isolated fixed point, the tangent space $T_p X$ decomposes as a direct sum of one-dimensional $T$-representations and its equivariant Euler class is given by the product of the corresponding weights.
\end{rem}
The fixed points of the $T$-action on $G$ are indexed by
$2$-element subsets $$I=\{i,j\}\subset\{1,\dots,n\}.$$
We denote by $p_{ij}$ the corresponding fixed point.
At $p_{ij}$, the fiber of the tautological subbundle $\mathcal U$ has
equivariant Chern roots $t_i$ and $t_j$, while the quotient bundle
$\mathcal Q$ has equivariant Chern roots $t_k$ for $k\neq i,j$. In view of the canonical isomorphism
\[
T_{p_{ij}}G \simeq \Hom(U,Q) \simeq U^\vee \otimes Q
\]
and Remark \ref{rem:identificationEulerClassTangent} we have
\begin{equation}\label{eq:tag{4.7}}
e_T(T_{p_{ij}}G) = \prod_{k\neq i,j}(t_k-t_i)(t_k-t_j).
\end{equation}
In view of \eqref{eq:tag{4.5}}, applying the pushforward
$\pi_\ast : A^\ast(Z) \to A^\ast(G)$ to the class
$c_i\left(\widetilde T\right)\,\xi^j$ in (\ref{eq:tag{4.4}}) gives a class
$\alpha_n \in A^\ast(G)$ such that
\[
\deg \bigl(
c^{\mathrm{Ma}}_{m-i}\left(\mathcal M\right)\cdot H^j
\bigr)
=
\int_Z c_i\left(\widetilde T\right)\,\xi^j
=
\int_G \alpha_n ,
\]
in view of localization formula \eqref{eq:tag{4.6}}, we obtain
\begin{equation}\label{eq:tag{4.8}}
\deg \bigl(
c^{\mathrm{Ma}}_{m-i}\left(\mathcal M\right)\cdot H^j
\bigr)=
\sum_{1\le i<j\le n}
\frac{\alpha_n|_{p_{ij}}}{
\prod_{k\neq i,j}(t_k-t_i)(t_k-t_j)
}.
\end{equation}
Since $\alpha_n$ is constructed from universal polynomials in the Chern
classes of the tautological bundles $\mathcal U$ and $\mathcal Q$, its
restriction to a fixed point depends only on the corresponding equivariant
weights.
In particular, the dependence on the weights $t_k$ with $k\neq i,j$ is
symmetric.
As the left-hand side of \eqref{eq:tag{4.8}} represents  ordinary
(nonequivariant) intersection number and degree, it is independent of the equivariant
parameters, hence the right-hand side simplifies to a numerical value.

\begin{thm}\label{prop:polynomiality-n}
The virtual Euclidean distance degree
\[
\vED(1,n,2,2)
\]
is a stably polynomial function of $n$.
\end{thm}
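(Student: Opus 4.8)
The plan is to make explicit the structure of the right-hand side of the equivariant localization formula~\eqref{eq:tag{4.8}} and show that, after the sum over fixed points collapses to a number, this number is eventually polynomial in $n$. First I would record that, by Theorem~\ref{thm:formulaAluffi} together with~\eqref{eq:tag{4.4}} and~\eqref{eq:tag{4.5}}, we have
\[
\vED(1,n,2,2)=\sum_{j=0}^{m}(-1)^{m+j}(2^{j+1}-1)\int_{G}s_{t(i,j)}\bigl(\Sym^2\mathcal U^\ast\bigr)\cdot\beta_{i,j},
\]
where $m=2n-2$ and each $\beta_{i,j}\in A^\ast(G)$ is a fixed universal polynomial in the Chern classes of $\mathcal U^\ast$ and $\mathcal Q^\ast$ coming from expanding $c_i(\widetilde T)=c\bigl(p^\ast(\Sym^2\mathcal U^\ast\oplus\mathcal U^\ast\otimes\mathcal Q^\ast)\bigr)/(1-\xi)$ via Proposition~\ref{eq:ExactSequence}. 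The key point is that the \emph{shape} of the integrand is independent of $n$: only the ranks of $\mathcal Q$ and the range of summation grow, and the classes $c(\Sym^2\mathcal U^\ast)$, $c(\mathcal U^\ast)$ depend on $n$ only through $c_1(\mathcal U^\ast),c_2(\mathcal U^\ast)$, while $c(\mathcal U^\ast\otimes\mathcal Q^\ast)$ and the Segre classes $s_t(\Sym^2\mathcal U^\ast)$ are determined by $c_1,c_2$ of $\mathcal U^\ast$ and, via the Whitney relation $c(\mathcal U)c(\mathcal Q)=1$, by $n$ in a controlled way.

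Next I would feed this into~\eqref{eq:tag{4.8}}. At the fixed point $p_{ij}$ the restriction $\alpha_n|_{p_{ij}}$ is a symmetric polynomial in the $n-2$ weights $\{t_k:k\neq i,j\}$ together with $t_i,t_j$, and the denominator is $\prod_{k\neq i,j}(t_k-t_i)(t_k-t_j)$ as in~\eqref{eq:tag{4.7}}. Because the answer is a pure number, I am free to specialize the torus parameters; the cleanest choice is to set $t_k=q^{k}$ for a formal variable $q$ (or, equivalently, to pass to the limit realizing the classical Bott residue / Vandermonde form), so that the sum over $2$-subsets $\{i,j\}$ becomes a single rational expression in $q$ whose value at $q\to 1$ is $\vED(1,n,2,2)$. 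The structural claim to extract is: for each fixed pair of indices $(i,j)$ in the expansion, the summand is of the form $P_n(t_i,t_j)\big/\bigl(\text{product of }n-2\text{ linear factors}\bigr)$ where $P_n$ is obtained by summing a fixed polynomial against elementary symmetric functions of the remaining weights; summing over $\{i,j\}$ and clearing denominators yields an expression $N(n)/D(n)$ where $D(n)$ is a product over $i<j$ of $\prod_{k\neq i,j}$-type factors and $N(n)$ is a polynomial of bounded degree in the $t$'s for each $n$.

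The main obstacle — and the heart of the argument — is controlling this combinatorial sum so as to see the polynomial dependence on $n$ once all equivariant parameters have cancelled. I would argue as follows: all intersection numbers $\int_G s_t(\Sym^2\mathcal U^\ast)\cdot\beta_{i,j}$ can be evaluated by Schubert calculus on $\Gr_2(\C^n)$; pushing forward a monomial $c_1(\mathcal U^\ast)^a c_2(\mathcal U^\ast)^b$ against the fundamental class gives, for $a+2b=\dim G=2(n-2)$, a value that is a \emph{single binomial-coefficient-type expression} in $n$, hence — for $a,b$ in any fixed bounded range relative to $n$ — a polynomial in $n$ of degree at most that range. The point is that although the degree $m=2n-2$ of the integrand grows linearly, after expanding $1/(1-\xi)$ and applying the projective-bundle pushforward $p_\ast\xi^{2+t}=s_t(\Sym^2\mathcal U^\ast)$, each of the finitely many Aluffi terms contributes an integral of a class whose ``non-universal'' part is $s_{t}(\Sym^2\mathcal U^\ast)$ with $t$ growing, but $s_t(\Sym^2\mathcal U^\ast)$ is itself a polynomial of bounded weighted degree in $c_1(\mathcal U^\ast),c_2(\mathcal U^\ast)$ (degree $\le t$), so the relevant monomials $c_1^a c_2^b$ have $a+2b=2(n-2)$ with $b$ bounded, forcing $a=2(n-2)-2b$ to vary but in a way that keeps $\int_G c_1^{a}c_2^{b}$ a polynomial in $n$. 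Assembling the finitely many such polynomial contributions, each weighted by the constant $(-1)^{m+j}(2^{j+1}-1)$, and checking that the alternating signs and the $2^{j+1}$ weights combine into a single polynomial in $n$ for $n$ sufficiently large, gives the stable polynomiality; the stabilization threshold is exactly the point past which the index ranges in Aluffi's formula and in the Segre expansion stop interacting with the boundary constraints $0\le j\le m=2n-2$ and $0\le b\le\lfloor t/2\rfloor$. A direct inspection of the resulting closed expression — which is where the ``direct inspection'' promised in the introduction enters — then confirms the polynomiality and, if desired, produces the explicit polynomial.
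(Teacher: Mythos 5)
Your opening moves coincide with the paper's: Aluffi's formula (Theorem \ref{thm:formulaAluffi}), the reduction \eqref{eq:tag{4.4}}--\eqref{eq:tag{4.5}} to integrals on $G=\Gr_2(\C^n)$, and the localization formula \eqref{eq:tag{4.8}}. But at the decisive step you abandon localization and replace it with a Schubert-calculus claim that is false. You assert that pushing forward a monomial $c_1(\mathcal U^\ast)^a c_2(\mathcal U^\ast)^b$ with $a+2b=\dim G=2(n-2)$ gives ``a single binomial-coefficient-type expression in $n$, hence a polynomial in $n$.'' Since $c_2(\mathcal U^\ast)=\sigma_{1,1}$ is the class of a sub-Grassmannian $\Gr_2(\C^{n-1})$, one has
\[
\int_{\Gr_2(\C^n)} c_1(\mathcal U^\ast)^{2(n-2)-2b}\,c_2(\mathcal U^\ast)^{b}
=\deg \Gr_2(\C^{n-b})=\frac{1}{n-b-1}\binom{2(n-b-2)}{\,n-b-2\,},
\]
a Catalan number, which grows exponentially in $n$ for fixed $b$. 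So the individual monomial integrals are emphatically not polynomial in $n$, and your subsidiary claim that only boundedly many values of $b$ occur in the expansion of $s_t(\Sym^2\mathcal U^\ast)$ is also wrong: $s_t$ is homogeneous of weighted degree $t$ in $c_1,c_2$, so $b$ ranges up to $\lfloor t/2\rfloor$ with $t$ growing linearly in $n$. The polynomiality of $\vED(1,n,2,2)$ cannot be seen monomial by monomial after eliminating $\mathcal Q^\ast$ via Whitney; it is a cancellation phenomenon of the specific combination coming from $c(\mathcal U^\ast\otimes\mathcal Q^\ast)$ and the Nash bundle.

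This is exactly why the paper does \emph{not} leave the localization formula. Its proof specializes the equivariant parameters to $t_\ell=\ell$ and observes that each fixed-point contribution in \eqref{eq:tag{4.8}} is a finite linear combination of evaluations of symmetric functions of fixed degree over the set $\{1,\dots,n\}\setminus\{i,j\}$; stable polynomiality then follows from the standard stability of such evaluations (Macdonald), with only finitely many such sums occurring. Your proposal gestures at this (the $t_k=q^k$ specialization and the ``$N(n)/D(n)$'' bookkeeping) but never extracts the symmetric-function stability that actually closes the argument; instead it pivots to the incorrect monomial-by-monomial claim. To repair your write-up, delete the Schubert-calculus paragraph and carry out the analysis of the localized summands: identify $\widetilde\alpha_n|_{p_{ij}}/\prod_{k\neq i,j}(t_k-t_i)(t_k-t_j)$ as a fixed (i.e.\ $n$-independent) symmetric expression in the complementary weights and invoke the polynomial dependence of such symmetric sums on $n$.
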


\begin{proof}
In view of Theorem \ref{thm:formulaAluffi} and formula \eqref{eq:tag{4.8}} we can write $\vED(1,n,2,2)$ as a finite sum of integrals
\[
\int_G \alpha _n
\]
where $\alpha_n\in A^\ast(G)$ is obtained from polynomials
in the Chern classes of the tautological bundles $\mathcal U^\ast$ and $\mathcal Q^\ast$
(and Segre classes of $\Sym^2\mathcal U^\ast$) via the pushforward
$\pi_\ast : A^\ast(Z)\to A^\ast(G)$. Choosing an equivariant lift
$\widetilde{\alpha}_n\in A_T^\ast(G)$ of $\alpha_n$, the localization
formula \eqref{eq:tag{4.6}} gives
\[
\int_{G} \alpha_n
=
\sum_{1\le i<j\le n}
\frac{\widetilde{\alpha}_n|_{p_{ij}}}{
\prod_{k\neq i,j}(t_k-t_i)(t_k-t_j)
}.
\]
Each summand is obtained by evaluating a symmetric polynomial in the
equivariant Chern roots of $\mathcal U$ and $\mathcal Q$ at the fixed point
$p_{ij}$ and hence the dependence on the variables $\{t_k\}_{k\neq i,j}$ is symmetric.
Since the localization sum represents an ordinary intersection number, it does not depend on the equivariant parameters, so we can specialize $t_\ell=\ell$ for $\ell=1,\dots,n$.
After this specialization, each summand becomes a finite linear combination
of symmetric sums over the set $\{1,\dots,n\}\setminus\{i,j\}$.
Such symmetric sums are evaluations of symmetric functions of fixed degree,
and hence depend polynomially on $n$ for $n$ sufficiently large,
see \cite[Chapter 1, Section 2]{MacD95}. As the total number of such sums is finite, we conclude.
\end{proof}

\subsection{A question}\label{sec:question}

While the previous discussion settles the problem for the geometric invariant, the $\vED$, it leaves the question about the stable polynomiality of the metric invariant open along a family of spaces with scalar products. A reasonable definition for such a family is the following:
\begin{defn}
    A compatible family of scalar product spaces is a sequence of triples
    \[
    \left( \C^n, q_n , i_n\right)_{n\in\mathbb N}
    \]
    where $q_n$ is a non degenerate scalar product on $\C^n$ and $i_n :\C^n \to \C^{n+1}$ is a linear embedding such that, denoting with $Q_n \su \P^n$ the isotropic quadric associated with $q_{n+1}$, the induced map $\P^{n-1}\to \P^n$ satisfies
    \[
    Q_n \cap \P^{n-1} = Q_{n-1}.
    \]
\end{defn}

Obvious examples of such objects are diagonal scalar product spaces:
\begin{example}
     A family of scalar product spaces is said to be diagonal if there exists a sequence of strictly positive real numbers $\left(  a_n \right)_{n\ge0}$ such that
     \[
     q_n = \mathrm{diag}\left( a_0 ,\dots, a_{n-1}\right) \quad\quad\mathrm{and}\quad\quad i_n \left( x_0\dots,x_{n-1}\right)=\left( x_0\dots,x_{n-1},0\right).
     \]
     It is clear that diagonal families are compatible.
\end{example}

\begin{quest}\label{quest:polynomiality}
    Given a compatible family of scalar product spaces $\left( \C^n, q_n , i_n\right)_{n\in\mathbb N}$, is it true that the function
    \[n\mapsto \ED_{q_n}\left(1,n,2,2\right)\]
    is stably polynomial?
\end{quest}
Clearly question \ref{quest:polynomiality} is equivalent to the fact that
\[n\mapsto \vED\left(1,n,2,2\right) - \ED_{q_n}\left(1,n,2,2\right)\]
is stably polynomial. Note that, in view of Example \ref{exa:BombieriWeyl}, the Bombieri-Weyl scalar product defines a compatible family and it seems reasonable to expect a "nice" behavior.

\section{Numerical results}\label{sec:numerical}
\subsection{Homotopy continuation}\label{sec:Homotopy}

In this section we illustrate, by means of explicit computations, the dependence of the
Euclidean Distance degree on the choice of the scalar product.
The computations are performed using numerical algebraic geometry techniques,
in particular homotopy continuation as implemented in \texttt{Julia}. Here we briefly recall the geometric idea behind this method. Instead of directly solving a
polynomial system $F(x)=0$, which may be difficult due to its degree or number of variables, one constructs a homotopy
connecting $F$ to a simpler system $G$ whose solutions are known. More precisely, one considers a one-parameter family of systems
\[
H(x,t) = (1-t)G(x) + \gamma t F(x) = 0,
\]
where $\gamma\in\mathbb{C}^\ast$ is a generic complex constant and $t\in[0,1]$.
Starting from the solutions of $G(x)=0$ at $t=0$, the algorithm numerically tracks the
corresponding solution paths as $t$ increases, using predictor-corrector methods
(such as Euler or Runge-Kutta prediction followed by Newton correction), until $t=1$.
For a generic choice of $\gamma$, standard results in algebraic geometry
(e.g. Bertini-type theorems and generic deformation arguments) guarantee that the solution
paths do not encounter singularities or bifurcations for $t\in[0,1)$.
Some paths may diverge as $t\to1$; these correspond to solutions at infinity and are
discarded. The number of paths converging to finite nonsingular solutions at $t=1$
equals the number of isolated complex solutions of the original system $F(x)=0$.
In our setting, this number coincides with the Euclidean Distance degree.

All computations below are performed using the package
\texttt{HomotopyContinuation.jl} \cite{BrTi}, available at
\href{https://www.juliahomotopycontinuation.org/}{https://www.juliahomotopycontinuation.org/}.

\subsection{An explicit example: dependence on the metric}\label{sec:NumericalResults}

We focus on the case $m=1$, $k=r=2$, and $n=3$, so that the associated neurovariety
identifies with the hypersurface
\[
\mathcal{M} = \{ \det(X)=0 \} \subset \Sym^2(\mathbb{C}^3),
\]
consisting of symmetric $3\times3$ matrices of rank at most $2$.
In affine coordinates, $\mathcal{M}$ is defined by a single cubic equation in six variables.
Given a scalar product $q$ on $\Sym^2(\mathbb{C}^3)$ and a generic target point $u$,
the ED-critical points of $\mathcal{M}$ are obtained by solving the Lagrange system
\[
\begin{cases}
\det(X)=0, \\
q(x-u) = \lambda \nabla \det(X),
\end{cases}
\]
where $x$ denotes the vector of affine coordinates of $X$ and $\lambda$ is a Lagrange multiplier.
This system consists of seven polynomial equations in seven unknowns and can be treated
numerically via homotopy continuation.
We performed two sets of computations.
In the first case, we chose a generic scalar product on $\Sym^2(\mathbb{C}^3)$, represented
by a random symmetric invertible matrix.
In the second case, we used the Bombieri-Weyl scalar product, which in the chosen coordinates
corresponds to a fixed diagonal matrix.
For each choice of the metric, several random target points $u$ were tested in order to
check numerical stability.
In the generic case, the homotopy continuation consistently returns $13$ isolated nonsingular
solutions, independently of the random choices involved. In view of Proposition \ref{prop:generalScalarProduct} this strongly indicates that
\[
\vED(1,3,2,2)=13.
\]
In contrast, when the Bombieri-Weyl scalar product is used, the same procedure consistently
returns only $3$ solutions. Hence,
\[
\ED_{\mathrm{BW}}(1,3,2,2)=3,
\]
showing a strict inequality
\[
\ED_{\mathrm{BW}}(1,3,2,2) < \ED_{\mathrm{gen}}(1,3,2,2).
\]
This explicit example illustrates concretely the dependence of the Euclidean Distance degree
on the choice of the scalar product and provides a direct comparison between the generic
value and the value associated with a highly symmetric metric.

The Julia code used for the numerical experiments in this section is available at
\url{https://github.com/GiacGraz/ed-degree-metric-dependence}.

\newpage

\bibliographystyle{plain}
\bibliography{References}

\end{document}